\documentclass[a4paper,leqno,10pt]{amsart}

\raggedbottom
\hfuzz3pt
\usepackage{epsf,graphicx,epsfig}
\usepackage{amscd}
\usepackage{amsmath,latexsym,amssymb,amsthm}
\usepackage[nospace,noadjust]{cite}
\usepackage{textcomp}
\usepackage{setspace,cite}
\usepackage{lscape,fancyhdr,fancybox}
\usepackage{stmaryrd}
\usepackage[all,cmtip]{xy}
\usepackage{tikz}
\usepackage{cancel}
\usetikzlibrary{shapes,arrows,decorations.markings}
\setlength{\unitlength}{0.4in}

\usepackage{graphicx}

\usepackage{color}
\usepackage{url}
\usepackage{enumerate}
\usepackage[mathscr]{euscript}

\setlength{\topmargin}{-9mm}
\setlength{\textheight}{10in}
\setlength{\oddsidemargin}{-0.1in}
\setlength{\evensidemargin}{-0.1in}
\setlength{\textwidth}{6.5in}
\setlength{\textwidth}{6.5in}

  \theoremstyle{plain}

\swapnumbers
    \newtheorem{thm}{Theorem}[section]
    \newtheorem{prop}[thm]{Proposition}
     
   \newtheorem{lemma}[thm]{Lemma}

    \newtheorem{subsec}[thm]{}
\theoremstyle{definition}
    \newtheorem{defn}[thm]{Definition}
        \newtheorem{remark}[thm]{Remark}
    \newtheorem{exam}[thm]{Example}

\theoremstyle{remark}

\title{}
\author{}
\date{}
\usepackage{amssymb}

\usepackage{hyperref}
\hypersetup{
	colorlinks,
	citecolor=blue,
	filecolor=black,
	linkcolor=blue,
	urlcolor=black
}

\begin{document}
\title{Involutive and oriented dendriform algebras}
\author{Apurba Das}
\address{Department of Mathematics and Statistics,
Indian Institute of Technology, Kanpur 208016, Uttar Pradesh, India.}
\email{apurbadas348@gmail.com}

\author{Ripan Saha}
\address{Department of Mathematics,
Raiganj University, Raiganj, 733134,
West Bengal, India.}
\email{ripanjumaths@gmail.com}

\subjclass[2010]{17A30, 16E40, 16W10, 16S80}
\keywords{Dendriform algebras, Cohomology, Involutions, Orientations, Deformations}
\begin{abstract}
Dendriform algebras are certain splitting of associative algebras and arise naturally from Rota-Baxter operators, shuffle algebras and planar binary trees. In this paper, we first consider involutive dendriform algebras, their cohomology and homotopy analogs. The cohomology of an involutive dendriform algebra splits the Hochschild cohomology of an involutive associative algebra. In the next, we introduce a more general notion of oriented dendriform algebras. We develop a cohomology theory for oriented dendriform algebras that closely related to extensions and governs the simultaneous deformations of dendriform structures and the orientation.
\end{abstract}
\maketitle

\tableofcontents

\section{Introduction}
In this paper, we study dendriform algebras equipped with an involution or more generally equipped with an orientation. Classical algebras such as associative algebras, $A_\infty$-algebras and $L_\infty$-algebras with involutions often appear in the standard constructions of algebras arising in geometric contexts, when the underlying geometric object has an involution \cite{cos,braun}. For instance, the de Rham cohomology of a manifold with an involution carries an involutive $A_\infty$-algebra structure \cite{loday-val}. However, such involutive algebras first appeared in mathematical physics in the context of an unoriented version of topological conformal field theories \cite{cos}. Motivated from the fact that cyclic involutive $A_\infty$-algebras are equivalent to open Klein topological conformal field theories, Braun \cite{braun} studied Hochschild cohomology of involutive associative algebras (more generally of involutive $A_\infty$-algebras). In \cite{fer-gian} the authors gave a simple interpretation of Braun's involutive Hochschild cohomology using involutive Bar complex which led them to also introduce Hochschild homology of involutive associative algebras. 

Recently, Koam and Pirashvili \cite{koam-pira} introduced a notion of oriented associative algebras (provided by an orientation group) and a new cohomology for an oriented associative algebra that governs the simultaneous deformation of the algebra and the orientation. An oriented associative algebra with the orientation group $\{ \pm 1 \}$ is an involutive associative algebra. 

As mentioned at the beginning, in this paper, we shall study involutive and oriented dendriform algebras. Dendriform algebras first appeared in the work of Loday in his study of the periodicity phenomenons in the algebraic $K$-theory \cite{loday}. He also defines cohomology for dendriform algebras with trivial coefficients. The operadic approach of the cohomology with arbitrary coefficients was treated in \cite{loday-val}. An explicit description of the cohomology was given in \cite{das}. Dendriform algebras pay very much attention in last two decades due to its connection with Rota-Baxter operators \cite{aguiar,fard-guo}, shuffle algebras \cite{loday,ronco} and planar binary trees \cite{loday}. Subsequently, Loday and his collaborators defined few other algebras (e.g. associative dialgebras, associative trialgebras, dendriform trialgebras, quadri-algebras, ennea-algebras etc.) in connections with combinatorial objects \cite{loday,loday-ronco,aguiar-loday,leroux}. See the recent article \cite{das-loday} for overview of these algebras. These algebras are called Loday-type algebras.

The contents of the present paper section-wise can be described as follows.
In Section \ref{section-prelim}, we recall some preliminaries about dendriform algebras and their cohomology.
In Section \ref{section-inv-dend}, we first define a cohomology for involutive dendriform algebras. 
This cohomology can be thought of as a splitting of Hochschild cohomology of involutive associative algebras (Theorem \ref{inv-split-ass}). We also show that the ordinary dendriform cohomology of an involutive dendriform algebra splits as a sum of this involutive dendriform cohomology and a skew factor (Theorem \ref{thm-decom}). 
Next, we consider involutive strongly homotopy dendriform algebras. They are splitting of involutive $A_\infty$-algebras introduced in \cite{braun}. Rota-Baxter operators on involutive $A_\infty$-algebras give rise to involutive strongly homotopy dendriform algebras. We end Section \ref{section-inv-dend} by defining involutions in other Loday-type algebras. Examples of involutive dendriform algebras will be given in Section \ref{section-ori-dend} where we consider a more general notion of oriented dendriform algebras.

In Section \ref{section-cohomo-ori-dend}, we define a cohomology theory for oriented dendriform algebras. In Proposition \ref{ass-dend-mor}, we relate this cohomology with the cohomology of oriented associative algebras introduced in \cite{koam-pira}. Like classical cohomology theories, the cohomology of oriented dendriform algebras is closely related to extensions of oriented dendriform algebras (Theorem \ref{ext-thm}).

Finally, in Section \ref{section-defor-ori-dend}, we generalize the classical deformation theory of Gerstenhaber \cite{gers} to oriented dendriform algebras. More precisely, we consider deformations of oriented dendriform algebras by deforming both the dendriform structure and the given orientation. We will see that such deformations are closely related to the cohomology of oriented dendriform algebras introduced in the previous section (Theorems \ref{formal-def} and \ref{inf-def}).

All vector spaces, linear maps and tensor products are over a field $\mathbb{K}$ of characteristics zero.

\section{Preliminaries}\label{section-prelim}
In this section, we recall dendriform algebras, their representations and explicit cohomology theory \cite{loday}, \cite{loday-val}, \cite{das}. We also recall group cohomology of group $G$ with coefficients in a $G$-module  (see \cite{weibel} for instance).

\subsection{Dendriform algebras}
\begin{defn}
A dendriform algebra consists of a vector space $D$ together with two linear maps $\prec, \succ : D \otimes D \rightarrow D$ satisfying the following three identities
\begin{align}
( a \prec b ) \prec c  =~&  a \prec ( b \prec c + b \succ c), \label{dend-iden-1}\\
( a \succ b ) \prec c =~& a \succ ( b  \prec c ),\\
( a \prec b + a \succ b) \succ c =~& a \succ ( b \succ c), ~\text{for all } a, b, c \in D.  \label{dend-iden-3}
\end{align}
\end{defn}

A dendriform algebra as above may be denoted by a triple $(D, \prec, \succ)$. It follows from (\ref{dend-iden-1})-(\ref{dend-iden-3}) that the sum operation $a \star b = a \prec b + a \succ b$ turns out to be associative. Thus, a dendriform algebra can be thought of as a splitting of an associative algebra.

Dendriform algebras appear from Rota-Baxter algebras, shuffle algebras and arithmetre of planar binary trees \cite{aguiar,fard-guo,ronco}. The operadic cohomology of dendriform algebras was given in \cite{loday-val}.
In \cite{das} one of the present author describes the explicit cohomology theory for dendriform algebras with coefficients in a representation. We recall this cohomology here. For $n \geq 1$, let $C_n$ be the set of first $n$ natural numbers. We denote the elements of $C_n$ as $C_n = \{ [1], [2], \ldots, [n] \}. $ For
$m , n \geq 1$ and $1 \leq i \leq m,$
there are maps $R_0 (m ; 1, \ldots, n, \ldots, 1) : C_{m+n-1} \rightarrow C_m$ and $R_i (m ; 1, \ldots, n, \ldots, 1) : C_{m+n-1} \rightarrow \mathbb{K}[C_n]$ defined by
\begin{align}\label{r-0} R_0 (m; \overbrace{1, \ldots, \underbrace{ n}_{i\text{-th ~ place}},  \ldots, 1}^m) ([r]) ~=~
\begin{cases} [r] ~~~ &\text{ if } ~~ r \leq i-1 \\ [i] ~~~ &\text{ if } i \leq r \leq i +n -1 \\
[r -n + 1] ~~~ &\text{ if } i +n \leq r \leq m+n -1, \end{cases}
\end{align}
\begin{align}\label{r-i} R_i (m; \overbrace{1, \ldots,  \underbrace{ n}_{i\text{-th ~ place}},  \ldots, 1}^m) ([r]) ~=~
\begin{cases} [1] + [2] + \cdots + [n] ~~~ &\text{ if } ~~ r \leq i-1 \\ [r - (i-1)] ~~~ &\text{ if } i \leq r \leq i +n -1 \\
[1]+ [2] + \cdots + [n] ~~~ &\text{ if } i +n \leq r \leq m+n -1. \end{cases}
\end{align}

\begin{defn}
Let $D= (D, \prec, \succ)$ be a dendriform algebra. A representation of $D$ consists of a vector space $M$ together with linear maps (called action maps)
\begin{align*}
\prec : D \otimes M \rightarrow M \qquad \succ : D \otimes M \rightarrow M \qquad
\prec : M \otimes D \rightarrow M \qquad \succ :  M \otimes D \rightarrow M
\end{align*}
satisfying the $9$ identities where each pair of $3$ identities correspond to the identities (\ref{dend-iden-1})-(\ref{dend-iden-3}) with
exactly one of $a, b, c$ is replaced by an element of $M.$
\end{defn}

It follows that any dendriform algebra is a representation of itself with the obvious action maps.
Before we recall the cohomology with coefficients in a representation, we need some more notations. Define elements $\pi_D \in \mathrm{Hom}(\mathbb{K}[C_2] \otimes D {\otimes} D , D)$, $\theta_1 \in \mathrm{Hom}(\mathbb{K}[C_2] \otimes D \otimes M, M)$ and $\theta_2 \in \mathrm{Hom}(\mathbb{K}[C_2] \otimes M \otimes D, M)$ by
\begin{align*}
\begin{cases} \pi_D ([1]; a, b ) = a \prec b  \\ 
\pi_D ([2]; a, b) = a \succ b, \end{cases} \qquad
\begin{cases} \theta_1 ([1]; a, m ) = a \prec m  \\ 
\theta_1 ([2]; a, m) = a \succ m, \end{cases} \qquad
\begin{cases} \theta_2 ([1]; m, a ) = m \prec a  \\ 
\theta_2 ([2]; m,a) = m \succ a, \end{cases}
\end{align*}
for $a, b \in D$ and $m \in M$. Then the linear maps $\prec, \succ : D {\otimes } D \rightarrow D$ defines a dendriform algebra structure on $D$ if and only if $\pi_D$ satisfies 
\begin{align*}
\pi_D ( R_0 (2;2,1)[r]; \pi_D (R_1 (2;2,1)[r]; a, b), c) = \pi_D ( R_0 (2;1,2)[r]; a, \pi_D (R_2 (2;1,2)[r]; b, c),
\end{align*}
for all $a, b, c \in D$ and $[r] \in C_3$. Moreover, $M$ defines a representation of the dendriform algebra $D$ if and only if for $a, b \in D$, $m \in M$ and $[r] \in C_3$,
\begin{align*}
\theta_1 ( R_0 (2;2,1)[r]; \pi_D (R_1 (2;2,1)[r]; a, b), m) =~& \theta_1 ( R_0 (2;1,2)[r]; a, \theta_1 (R_2 (2;1,2)[r]; b, m),
\\
\theta_2 ( R_0 (2;2,1)[r]; \theta_1 (R_1 (2;2,1)[r]; a, m), b) =~& \theta_1 ( R_0 (2;1,2)[r]; a, \theta_2 (R_2 (2;1,2)[r]; m, b),
\\
\theta_2 ( R_0 (2;2,1)[r]; \theta_2 (R_1 (2;2,1)[r]; m,a), b) =~& \theta_2 ( R_0 (2;1,2)[r]; m, \pi_D (R_2 (2;1,2)[r]; a,b).
\end{align*}
For more details, see \cite{das}. Let $(D, \prec, \succ)$ be a dendriform algebra and $M$ be a representation. For each $n \geq 1$, we define
\begin{align*}
C^n_{\mathrm{dend}}(D, M) := \mathrm{Hom} (\mathbb{K}[C_n] \otimes D^{\otimes n}, M)
\end{align*}
and a map $\delta_{\mathrm{dend}} : C^n_{\mathrm{dend}}(D, M) \rightarrow C^{n+1}_{\mathrm{dend}}(D, M)$ by
\begin{align*}
&(\delta_{\mathrm{dend}} f) ([r]; a_1 , \ldots, a_{n+1}) \\
&=  \theta_1 \big( R_0 (2;1,n) [r]; ~a_1, f (R_2 (2;1,n)[r]; a_2, \ldots, a_{n+1})   \big) \\
&+ \sum_{i=1}^n  (-1)^i~ f \big(  R_0 (n; 1, \ldots, 2, \ldots, 1)[r]; a_1, \ldots, a_{i-1}, \pi_D (R_i (1, \ldots, 2, \ldots, 1)[r]; a_i, a_{i+1}), a_{i+2}, \ldots, a_{n+1}   \big) \\
&+ (-1)^{n+1} ~\theta_2 \big( R_0 (2; n, 1) [r];~ f (R_1 (2;n,1)[r]; a_1, \ldots, a_n), a_{n+1}   \big),
\end{align*}
for $[r] \in C_{n+1}$ and $a_1, \ldots, a_{n+1} \in D$. It is shown in \cite{das} that $(\delta_{\mathrm{dend}})^2 = 0$. The cohomology of the cochain complex $( \{ C^n_{\mathrm{dend}} (D, M) \}_{n \geq 1}, \delta_{\mathrm{dend}} )$ is called the cohomology of the dendriform algebra $D$ with coefficients in $M$. We call this cohomology as ordinary cohomology to distinguish it from the cohomology of involutive dendriform algebras in the next section.

\subsection{Group cohomology}
In the main course of the paper, we will also need the cohomology of a group with coefficients in a module \cite{weibel}. Let $G$ be a group and $M$ a $G$-module. We define $C^n_{\mathrm{grp}}(G, M) := \mathrm{Maps}( G^{\times n}, M)$ and a map $\delta_{\mathrm{grp}} : C^n_{\mathrm{grp}}(G, M) \rightarrow C^{n+1}_{\mathrm{grp}}(G, M)$ by
\begin{align*}
(\delta_{\mathrm{grp}} f) ( g_1, \ldots, g_{n+1}) =  g_1 f( g_2 , \ldots, g_{n+1}) + \sum_{i=1}^n (-1)^i~ f ( g_1, \ldots, g_i g_{i+1}, \ldots, g_{n+1}) + (-1)^{n+1} f ( g_1, \ldots, g_n).
\end{align*}
Then it follows that $(\delta_{\mathrm{grp}} )^2 = 0$. The corresponding cohomology groups are denoted by $H^n_{\mathrm{grp}}(G, M)$, for $n \geq 0$ and they are called the cohomology of $G$ with coefficients in the module $M$.

\begin{remark}\label{rem-grp-coho}
A cochain complex $(M_\bullet, \delta)$ is said to be $G$-equivariant if each $M_i$ is a $G$-module and the coboundary map $\delta$ is $G$-equivariant. In this case, one can define a bicomplex computing the cohomology of $G$ with coefficients in the complex $(M_\bullet, \delta)$. The $i$-th column of the bicomplex is the cohomology complex of the group $G$ with coefficients in $M_i$ and the horizontal maps are induced by the $G$-equivariant map $\delta$.
\end{remark}

\section{Involutive dendriform algebras and their cohomology}\label{section-inv-dend}

In this section, we introduce involutive dendriform algebras and define suitable cohomology for involutive dendriform algebras. This cohomology is a splitting of the Hochschild cohomology of involutive associative algebras introduced by Braun \cite{braun}. The ordinary dendriform cohomology of an involutive dendriform algebra splits as a sum of this involutive dendriform cohomology and a skew factor. We also introduce involutive $\mathrm{Dend}_\infty$-algebras and their relation with Rota-Baxter operator on involutive $A_\infty$-algebras. We end this section by defining involutions in other Loday-type algebras.

\begin{defn}
Let $D = (D, \prec, \succ)$ be a dendriform algebra. An involution on $D$ is a linear map $ *:  D \rightarrow D, ~ a \mapsto a^*$ satisfying $a^{**} = a$ and
$ (a \prec b)^* = b^*  \succ a^*,$ for all $a, b \in D$.
\end{defn}
The two conditions in the above definition implies that $(a \succ b)^* = b^* \prec a^*$ as 
\begin{align*}
( a \succ b )^* = (a^{**} \succ b^{**})^* = (b^* \prec a^*)^{**} = b^* \prec a^*.
\end{align*}
A dendriform algebra together with an involution is called an involutive dendriform algebra. Examples of involutive dendriform algebras will be provided in the next section when we define a more general notion of oriented dendriform algebras.

An involutive associative algebra is an associative algebra $A$ together with a linear map $*  : A \rightarrow A, ~ a \mapsto a^*$ satisfying $a^{**} = a$ and $(ab)^* = b^* a^*$, for all $a, b \in A$. It follows that if $(D, \prec, \succ)$ is an involutive dendriform algebra with involution $*$, then $(D, \star = \prec + \succ)$ is an involutive associative algebra with same involution $*$.

Let $(D, \prec, \succ)$ be an involutive dendriform algebra with involution $*$. A representation of it is given by a representation $M$ of the dendriform algebra $D$ together with an involution $M \rightarrow M, ~m \mapsto m^*$ (i.e. $m^{**} = m$) satisfying $(a \prec m)^* = m^* \succ a^*$ and $( a \succ m)^* = m^* \prec a^*$. 

These conditions also implies that $(m \prec a)^* = a^* \succ m^*$ and $(m \succ a)^* = a^* \prec m^*$ holds. It is easy to see that any involutive dendriform algebra is a representation of itself.

\subsection{Cohomology}\label{subsec-inv-cohomo}

Let $(D, \prec, \succ)$ be an involutive dendriform algebra and $M$ be a representation. First consider the ordinary dendriform algebra cochain complex $(\{ C^n_{\mathrm{dend}} (D, M) \}_{n \geq 1}, \delta_{\mathrm{dend}} )$ as given in the previous section. For each $n \geq 1$, we define
\begin{align*}
iC^n_{\mathrm{dend}} (D, M) := \big\{ f \in C^n_{\mathrm{dend}} (D, M) |~ f ([r]; a_1, \ldots, a_n)^* = (-1)^{\frac{(n-1)(n-2)}{2} } f ([n-r+1]; a_n^*, \ldots, a_1^*),\\
\forall ~[r] \in C_n~ \text{ and } a_1, \ldots, a_n \in D \big\}.
\end{align*}

\begin{prop}\label{inv-sub-comp}
We have $\delta_{\mathrm{dend}} (iC^n_{\mathrm{dend}} (D, M)) \subset iC^{n+1}_{\mathrm{dend}} (D, M) $, for all $n \geq 1$.
\end{prop}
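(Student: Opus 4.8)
The plan is to verify directly that if $f \in iC^n_{\mathrm{dend}}(D,M)$ satisfies the involution-compatibility condition, then so does $\delta_{\mathrm{dend}} f$. Write $g = \delta_{\mathrm{dend}} f$. The goal is to show that
\begin{align*}
g([r]; a_1, \ldots, a_{n+1})^* = (-1)^{\frac{n(n-1)}{2}} g([n-r+2]; a_{n+1}^*, \ldots, a_1^*),
\end{align*}
where the sign is the one appearing in the definition of $iC^{n+1}_{\mathrm{dend}}$ with $n$ replaced by $n+1$. I would start from the explicit three-part formula for $\delta_{\mathrm{dend}} f$: the leading $\theta_1$-term, the alternating sum of $n$ inner-multiplication terms built from $\pi_D$, and the trailing $\theta_2$-term. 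I would apply $*$ to each piece, using the involution axioms $(a \prec m)^* = m^* \succ a^*$, $(a \succ m)^* = m^* \prec a^*$ (and their $\theta_2$-analogues $(m\prec a)^* = a^* \succ m^*$, $(m \succ a)^* = a^* \prec m^*$), together with the hypothesis on $f$.

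The key bookkeeping step is to track how the involution interchanges the two ``boundary'' terms and reverses the interior sum. Applying $*$ to the $\theta_1$-term, the identity $(a \prec m)^*=m^*\succ a^*$ converts a left-action into a right-action, so the image of the leading $\theta_1$-term should land on the trailing $\theta_2$-term of $g$ evaluated at the reversed, starred arguments $a_{n+1}^*, \ldots, a_1^*$ and the reflected index $[n-r+2]$; symmetrically the trailing $\theta_2$-term maps back to the leading $\theta_1$-term. For the interior sum, the $i$-th term involves $\pi_D$ applied to the adjacent pair $(a_i, a_{i+1})$; after starring, the relation $(a\prec b)^* = b^*\succ a^*$ together with the already-proven $(a\succ b)^* = b^*\prec a^*$ swaps $\prec \leftrightarrow \succ$, which is exactly the effect recorded by the index maps $R_i(\,\cdot\,;1,\ldots,2,\ldots,1)$ sending $[r]$ to the reflected $[n-r+2]$. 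Under the substitution $i \mapsto n+1-i$ the reversed product $\pi_D$ on $(a_{n-i+1}^*, a_{n-i+2}^*)$ reindexes the sum to the one defining $g([n-r+2]; a_{n+1}^*, \ldots, a_1^*)$.

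The main obstacle, and the part that needs genuine care rather than routine manipulation, is reconciling the signs. Three separate sign sources must combine correctly: the quadratic sign $(-1)^{(n-1)(n-2)/2}$ carried by $f$ through the hypothesis, the alternating signs $(-1)^i$ inside the sum (which under $i \mapsto n+1-i$ become $(-1)^{n+1-i}$ and must be realigned), and the boundary signs $1$ and $(-1)^{n+1}$ that get exchanged when $\theta_1$ and $\theta_2$ terms swap roles. I would verify that the product of these contributions collapses to precisely $(-1)^{n(n-1)/2}$, using the elementary identity $\frac{(n-1)(n-2)}{2} + (n-1) \equiv \frac{n(n-1)}{2} \pmod 2$ to absorb the shift in the quadratic exponent, and checking the boundary terms separately since their parity computation differs slightly from the interior terms. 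Once the index reflection $[r] \mapsto [n-r+2]$ is matched against the combinatorial maps $R_0, R_i$ in the definition of $\delta_{\mathrm{dend}}$ and all signs are confirmed to agree, the identity holds term by term, establishing that $\delta_{\mathrm{dend}} f \in iC^{n+1}_{\mathrm{dend}}(D,M)$.
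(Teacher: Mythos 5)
Your proposal is correct and follows essentially the same route as the paper: a direct term-by-term computation applying $*$ to the three pieces of $\delta_{\mathrm{dend}} f$, observing that the involution exchanges the $\theta_1$- and $\theta_2$-boundary terms and reverses the interior sum via $i \mapsto n+1-i$, and collecting signs through the congruence $\frac{(n-1)(n-2)}{2} + (n+1) \equiv \frac{n(n-1)}{2} \pmod 2$. The sign target $(-1)^{n(n-1)/2}$ and the index reflection $[r] \mapsto [n-r+2]$ both match the paper's computation exactly.
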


\begin{proof}
For $f \in iC^n_{\mathrm{dend}} (D, M) $ and $[r] \in C_{n+1}$, we have
\begin{align*}
&((\delta_{\mathrm{dend}} f) ([r]; a_1, \ldots, a_{n+1}))^* \\
&= \theta_1 \big( R_0 (2;1,n) [r]; ~a_1, f (R_2 (2;1,n)[r]; a_2, \ldots, a_{n+1})   \big)^* \\
& + \sum_{i=1}^n  (-1)^i~ f \big(  R_0 (n; 1, \ldots, 2, \ldots, 1)[r]; a_1, \ldots, a_{i-1}, \pi_D (R_i (n;1, \ldots, 2, \ldots, 1)[r]; a_i, a_{i+1}), a_{i+2}, \ldots, a_{n+1}   \big)^* \\
& + (-1)^{n+1} ~\theta_2 \big( R_0 (2; n, 1) [r];~ f (R_1 (2;n,1)[r]; a_1, \ldots, a_n), a_{n+1}   \big)^* \\
&= \theta_2 \big( R_0 (2;n,1) [n-r+2]; f (R_2 (2;1,n)[r]; a_2, \ldots, a_{n+1})^*, a_1^*   \big) \\
& + (-1)^{\frac{(n-1)(n-2)}{2}}~ \sum_{i=1}^n (-1)^i ~ f \big( R_0 (n; 1, \ldots, \underbrace{2}_{n-i+1}, \ldots, 1)[n-r+2]; a_{n+1}^*, \ldots, a_{i+2}^*,\\
& \qquad \qquad \qquad \qquad \qquad \qquad \pi_D (R_i (n;1, \ldots, \underbrace{2}_i, \ldots, 1)[r]; a_i, a_{i+1})^*, a_{i-1}^*, \ldots, a_1^* \big)  \\
& + (-1)^{n+1} \theta_1 (R_0 (2;1,n)[n-r+2]; a_{n+1}^*, f (R_1 (2;n,1)[r]; a_1, \ldots, a_n)^*) \\
&= (-1)^{\frac{(n-1)(n-2)}{2}}~ \theta_2 \big( R_0 (2;n,1) [n-r+2]; f (R_1 (2;n,1)[n-r+2]; a_{n+1}^*, \ldots, a_{2}^*), a_1^*   \big) \\
&+ (-1)^{\frac{(n-1)(n-2)}{2}} (-1)^{n+1} ~\sum_{i=1}^n (-1)^{n-i+1}~ f \big( R_0 (n; 1, \ldots, \underbrace{2}_{n-i+1}, \ldots, 1)[n-r+2]; a_{n+1}^*, \ldots, a_{i+2}^*, \\
& \qquad \qquad \qquad \qquad \qquad \qquad  \pi_D (R_{n-i+1} (n;1, \ldots, \underbrace{2}_{n-i+1}, \ldots, 1)[n-r+2];  a_{i+1}^*, a_i^*), a_{i-1}^*, \ldots, a_1^* \big)  \\
& + (-1)^{n+1}  (-1)^{\frac{(n-1)(n-2)}{2}}~ \theta_1 (R_0 (2;1,n)[n-r+2]; a_{n+1}^*, f (R_2 (2;1,n)[n-r+2]; a_n^*, \ldots, a_1^*)) \\
&= (-1)^{\frac{(n-1)(n-2)}{2} + (n+1)} ~(\delta_{\mathrm{dend}} f)([n-r+2]; a_{n+1}^*, \ldots, a_1^*) \\
&= (-1)^{\frac{n(n-1)}{2}}  ~ (\delta_{\mathrm{dend}} f)([n-r+2]; a_{n+1}^*, \ldots, a_1^*).
\end{align*}
This shows that $\delta_{\mathrm{dend}} f \in iC^{n+1}_{\mathrm{dend}} (D, M)$.
\end{proof}

It follows from the above proposition that $( \{ iC^n_{\mathrm{dend}} (D, M) \}_{n \geq 1}, \delta_{\mathrm{dend}} )$  is a subcomplex of the cochain complex $(\{ C^n_{\mathrm{dend}} (D, M) \}_{n \geq 1}, \delta_{\mathrm{dend}} )$. The cohomology of this subcomplex is called the cohomology of the involutive dendriform algebra $D$ with coefficients in $M$.

Note that the cohomology of involutive associative algebras was introduced by Braun \cite{braun} by viewing an involutive associative algebra as a square-zero derivation on the free involutive associative algebra (equivalently as a square-zero coderivation on the cofree involutive associative coalgebra). Later, Fern\'{a}ndez-Val\'{e}ncia and Giansiracusa \cite{fer-gian} gave a construction of the cohomology using involutive bar complex. Here we give a more explicit description of their cohomology.

Let $A$ be an involutive associative algebra and $M$ be an involutive $A$-bimodule. Then the collection of spaces $\{ iC^n_{\mathrm{Hoch}} (A, M) \}_{n \geq 0}$ where $iC^0_{\mathrm{Hoch}} = \{ m \in M| m^* = -m \}$ and for $n \geq 1$
\begin{align*}
iC^n_{\mathrm{Hoch}} (A, M) := \big\{ f : A^{\otimes n} \rightarrow M |~ f ( a_1, \ldots, a_n)^* = (-1)^{\frac{(n-1)(n-2)}{2} } f ( a_n^*, \ldots, a_1^*),~ \forall a_1, \ldots, a_n \in A \big\}
\end{align*}
is a subcomplex of the classical Hochschild cochain complex. The cohomology of this subcomplex is called the Hochschild cohomology of the involutive associative algebra $A$ with coefficients in the involutive $A$-bimodule $M$. We denote the corresponding cohomology groups by $iH^\bullet_{\mathrm{Hoch}}(A, M)$.

It has been shown in \cite[Theorem 2.9]{das} that ordinary dendriform algebra cohomology splits the Hochschild cohomology. This splitting passes onto respective involutive subcomplexes. Hence we obtain the following.

\begin{thm}\label{inv-split-ass}
Let $(D, \prec, \succ)$ be an involutive dendriform algebra and $M$ a representation of it. Consider the corresponding involutive associative algebra $(D, \star = \prec + \succ) $ and the involutive associative $D$-bimodule $M$. Then the collection of maps
\begin{align*}
S_n : iC^n_{\mathrm{dend}}(D, M) \rightarrow iC^n_{\mathrm{Hoch}} (D, M), ~ f \mapsto f_{[1]} + \cdots + f_{[n]},~ \text{ for } n \geq 1
\end{align*}
induces a morphism $iH^\bullet_{\mathrm{dend}}(D, M) \rightarrow iH^\bullet_{\mathrm{Hoch}} (D, M)$ between corresponding cohomologies.
\end{thm}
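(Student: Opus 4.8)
The plan is to exploit the splitting that already exists at the level of ordinary (non-involutive) cochain complexes, and to verify only that it respects the symmetry constraints defining the involutive subcomplexes. By \cite[Theorem 2.9]{das}, the maps $S_n : C^n_{\mathrm{dend}}(D, M) \to C^n_{\mathrm{Hoch}}(D, M)$, $f \mapsto f_{[1]} + \cdots + f_{[n]}$ with $f_{[r]} = f([r]; -, \ldots, -)$, already assemble into a morphism of cochain complexes intertwining $\delta_{\mathrm{dend}}$ with the Hochschild coboundary $\delta_{\mathrm{Hoch}}$. Because $iC^\bullet_{\mathrm{dend}}$ and $iC^\bullet_{\mathrm{Hoch}}$ are subcomplexes carrying the restrictions of these same differentials, the entire theorem will follow once I show that $S_n$ carries $iC^n_{\mathrm{dend}}(D, M)$ into $iC^n_{\mathrm{Hoch}}(D, M)$: the restriction of $S_\bullet$ is then automatically a chain map and induces the asserted map on cohomology.

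So the single point to check is preservation of the involutive symmetry. First I would take $f \in iC^n_{\mathrm{dend}}(D, M)$, so that $f([r]; a_1, \ldots, a_n)^* = (-1)^{(n-1)(n-2)/2} f([n-r+1]; a_n^*, \ldots, a_1^*)$ for all $[r] \in C_n$. Applying the linear involution to $S_n(f)(a_1, \ldots, a_n) = \sum_{r=1}^n f([r]; a_1, \ldots, a_n)$ and using this relation in each summand produces $(-1)^{(n-1)(n-2)/2} \sum_{r=1}^n f([n-r+1]; a_n^*, \ldots, a_1^*)$. Since $r \mapsto n-r+1$ is a bijection of $\{1, \ldots, n\}$, reindexing the sum gives exactly $(-1)^{(n-1)(n-2)/2} S_n(f)(a_n^*, \ldots, a_1^*)$, which is precisely the defining condition for membership in $iC^n_{\mathrm{Hoch}}(D, M)$.

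I do not expect a genuine obstacle here, since the substantive input---that $S_\bullet$ commutes with the differentials---is imported wholesale from \cite{das}. The only structural feature that makes the argument work is that the sign $(-1)^{(n-1)(n-2)/2}$ in the involutive dendriform condition is identical to the one in the involutive Hochschild condition, so that it factors out of the sum over $[r]$ untouched; this is built into the parallel definitions of the two subcomplexes. The sole routine verification is that $*$ is $\mathbb{K}$-linear, which lets it commute with the finite sum $\sum_{r=1}^n f_{[r]}$, and this holds by hypothesis on the involution of $M$.
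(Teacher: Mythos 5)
Your proposal is correct and follows essentially the same route as the paper: the paper likewise imports the chain-map property of $S_\bullet$ from \cite[Theorem 2.9]{das} and simply notes that the splitting "passes onto respective involutive subcomplexes." Your explicit verification that $S_n$ preserves the involutive symmetry condition (via the reindexing $r \mapsto n-r+1$) is exactly the detail the paper leaves implicit.
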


\subsection{Splitting theorem} Let $(D, \prec, \succ)$ be an involutive dendriform algebra and $M$ be a representation of it. For each $n \geq 1$, consider a linear map $T_n : C^n_{\mathrm{dend}}(D, M) \rightarrow C^n_{\mathrm{dend}} (D, M)$ defined by
\begin{align*}
(T_n f) ([r]; a_1, \ldots, a_n ) = (-1)^{\frac{(n-1)(n-2)}{2} } ~f( [n-r+1]; a_n^*, \ldots, a_1^*)^*,
\end{align*}
for all $[r] \in C_n$ and $a_1, \ldots, a_n \in D$. It is easy to see that $(T_n)^2 = \mathrm{id}$. Hence the map $T_n$ has eigenvalues $+1$ and $-1$. The eigenspace corresponding to the eigenvalue $+1$ is precisely given by $iC^n_{\mathrm{dend}}(D, M)$. We denote the eigenspace corresponding to the eigenvalue $-1$ by $i_{-}C^n_{\mathrm{dend}}(D, M)$. Then it is easy to see that
\begin{align}\label{split-iso}
C^n_{\mathrm{dend}}(D, M) \cong iC^n_{\mathrm{dend}}(D, M) \oplus i_{-}C^n_{\mathrm{dend}}(D, M), ~ f \mapsto \bigg( \frac{f + T_n f}{2}, \frac{f - T_nf}{2} \bigg), \text{ for } n \geq 1.
\end{align}
Moreover, a similar observation of Proposition \ref{inv-sub-comp} shows that $( \{ i_{-}C^n_{\mathrm{dend}}(D, M) \}_{n \geq 1}, \delta_{\mathrm{dend}} ) $ is a subcomplex of the ordinary dendriform algebra complex $( \{ C^n_{\mathrm{dend}}(D, M) \}_{n \geq 1}, \delta_{\mathrm{dend}} ) $. We denote the corresponding cohomology groups by $i_{-}H^\bullet_{\mathrm{dend}}(D, M)$. It is also easy to see that the isomorphisms (\ref{split-iso}) preserve the corresponding differentials on both sides. Hence we obtain the following.

\begin{thm}\label{thm-decom}
Let $D$ be an involutive dendriform algebra and $M$ be a representation of it. Then the ordinary dendriform algebra cohomology $H^\bullet_{\mathrm{dend}} (D, M)$ splits as $H^\bullet_{\mathrm{dend}}(D, M) \cong iH^\bullet_{\mathrm{dend}}(D, M) \oplus i_{-}H^\bullet_{\mathrm{dend}}(D, M)$.
\end{thm}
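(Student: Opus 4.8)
The plan is to promote the pointwise eigenspace decomposition (\ref{split-iso}) to a splitting of the \emph{cochain complex} $(\{C^n_{\mathrm{dend}}(D,M)\}_{n\geq 1}, \delta_{\mathrm{dend}})$ into two subcomplexes, and then to invoke the elementary fact that cohomology is additive with respect to direct sums of complexes. The structural point to isolate first is that the family $T = \{T_n\}_{n\geq 1}$ is not merely a collection of involutions on the individual cochain groups but an involutive \emph{cochain map}, i.e.\ $\delta_{\mathrm{dend}} \circ T_n = T_{n+1} \circ \delta_{\mathrm{dend}}$ for every $n$. Concretely, this is equivalent to the assertion that $\delta_{\mathrm{dend}}$ preserves each of the two eigenspaces of $T_n$, which is exactly Proposition \ref{inv-sub-comp} (for the $+1$ eigenspace $iC^n_{\mathrm{dend}}(D,M)$) together with the stated analogue for the $-1$ eigenspace $i_-C^n_{\mathrm{dend}}(D,M)$.

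Granting that $\delta_{\mathrm{dend}}$ respects both eigenspaces, I would argue as follows. Since $\mathbb{K}$ has characteristic zero, the operators $\tfrac{1}{2}(\mathrm{id} + T_n)$ and $\tfrac{1}{2}(\mathrm{id} - T_n)$ are well-defined complementary idempotents whose images are $iC^n_{\mathrm{dend}}(D,M)$ and $i_-C^n_{\mathrm{dend}}(D,M)$ respectively, yielding the isomorphism of graded vector spaces in (\ref{split-iso}). Because these idempotents are built from $T$, and $T$ commutes with $\delta_{\mathrm{dend}}$, the idempotents themselves commute with the differential; hence the isomorphism in (\ref{split-iso}) carries $\delta_{\mathrm{dend}}$ to $\delta_{\mathrm{dend}} \oplus \delta_{\mathrm{dend}}$ and is therefore an isomorphism of cochain complexes, not merely of the underlying graded spaces.

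Finally I would conclude by the general principle that taking cohomology commutes with finite direct sums of cochain complexes: if $C^\bullet \cong A^\bullet \oplus B^\bullet$ as complexes, then cocycles and coboundaries split compatibly with the decomposition, so $H^\bullet(C) \cong H^\bullet(A) \oplus H^\bullet(B)$. Applying this to $C^\bullet_{\mathrm{dend}}(D,M) \cong iC^\bullet_{\mathrm{dend}}(D,M) \oplus i_-C^\bullet_{\mathrm{dend}}(D,M)$ gives precisely the claimed splitting $H^\bullet_{\mathrm{dend}}(D,M) \cong iH^\bullet_{\mathrm{dend}}(D,M) \oplus i_-H^\bullet_{\mathrm{dend}}(D,M)$.

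The genuine content of the argument is the chain-map property of $T$, and this has effectively already been discharged: the sign bookkeeping performed in the proof of Proposition \ref{inv-sub-comp}, rerun without imposing the eigenvalue constraint on $f$, is exactly what establishes it. Thus I expect no serious obstacle; the only point demanding care is confirming that the global sign $(-1)^{\frac{n(n-1)}{2}}$ produced in Proposition \ref{inv-sub-comp} matches the sign implicit in the definition of $T_{n+1}$, so that $\delta_{\mathrm{dend}}$ genuinely \emph{preserves} each eigenspace (equivalently, $T$ commutes rather than anticommutes with $\delta_{\mathrm{dend}}$). Once that is verified, the remaining steps—splitting via idempotents and additivity of cohomology—are purely formal.
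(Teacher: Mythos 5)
Your proposal is correct and follows essentially the same route as the paper: the paper likewise observes that both eigenspaces of $T_n$ are subcomplexes (the $-1$ eigenspace by rerunning the computation of Proposition \ref{inv-sub-comp}), that the decomposition (\ref{split-iso}) via the projections $\tfrac{1}{2}(\mathrm{id}\pm T_n)$ is compatible with $\delta_{\mathrm{dend}}$, and then passes to cohomology. Your explicit framing of this as the chain-map property $\delta_{\mathrm{dend}}\circ T_n = T_{n+1}\circ\delta_{\mathrm{dend}}$, with the sign check $(-1)^{\frac{n(n-1)}{2}}$ matching the sign in $T_{n+1}$, is exactly the content the paper leaves implicit.
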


\begin{remark}
A similar decomposition theorem for the cohomology of involutive associative algebras (with coefficients in itself) was obtained by Braun \cite{braun}. His approach is based on viewing as involutive associative algebra as a square-zero derivation on the free involutive associative algebra (equivalently as a square-zero coderivation on the cofree associative coalgebra). This approach is only applicable to cohomology with self coefficients. On the other hand, our approach is more elementary and applicable to cohomology with arbitrary coefficients.
\end{remark}

\begin{remark}
It has been shown in \cite{das} that the ordinary dendriform cohomology of a dendriform algebra with coefficients in itself governs the deformation of the dendriform structure. For the general operadic treatment, see \cite{loday-val}. Similarly, the cohomology of an involutive dendriform algebra $D$ (with coefficients in itself) defined in this section governs the deformation of $D$ as an involutive dendriform algebra (not allowing to deform the involution). In the following sections, we consider a more general notion of oriented dendriform algebras and their cohomology that governs the simultaneous deformation of the dendriform structure and the orientation.
\end{remark}

\subsection{Involutive $\mathrm{Dend}_\infty$-algebras}\label{subsec-inv-dend-inf}
The notion of strongly homotopy dendriform algebras ($\mathrm{Dend}_\infty$-algebras in short) was defined in the book of Loday and Vallette \cite{loday-val}. In \cite{das}, one of the present author describes $\mathrm{Dend}_\infty$-algebras using the combinatorial functions $\{ R_0, R_i \}$ that are defined in Section \ref{section-prelim}. In this subsection, we study involutive $\mathrm{Dend}_\infty$-algebras.

\begin{defn}\cite{das} 
A $\mathrm{Dend}_\infty$-algebra consists of a graded vector space $\mathcal{A} = \oplus A_i$ together with a collection $\{ \mu_{k, [r]} : \mathcal{A}^{\otimes k} \rightarrow \mathcal{A} ~| ~1 \leq k < \infty, [r] \in C_k \}$ of multilinear maps with $\mathrm{deg}(\mu_{k, [r]}) =k-2$ satisfying the following identities: for each $n \geq 1$ and $[r] \in C_n$,
\begin{align*}
\sum_{i+j= n+1} \sum_{\lambda = 1}^j~(-1)^{\lambda (i+1) + i (|a_1|+ \cdots + |a_{\lambda -1}|) } ~\mu_{j, R_0(j;1, \ldots, i, \ldots, 1)[r]} \big( a_1, \ldots, a_{\lambda -1},
\mu_{i, R_\lambda (j;1, \ldots, i, \ldots, 1)[r]} (a_\lambda, \ldots, a_{\lambda + i -1} ),\\ a_{\lambda + i}, \ldots, a_n \big) = 0, 
\end{align*}
for $a_1, \ldots, a_n \in \mathcal{A}.$
\end{defn}

The coderivation interpretation of $\mathrm{Dend}_\infty$-algebras are also given in \cite{das}. Consider the graded vector space $V = s \mathcal{A}$, the suspension of $\mathcal{A}$. Then the multiplications $\{\mu_{k, [r]} \}_{1 \leq k < \infty, [r] \in C_k}$ induce multiplications $\{ \varrho_{k, [r]} : V^{\otimes k} \rightarrow V ~|~ 1 \leq k < \infty,~ [r] \in C_k\}$ on the graded vector space $V$ with $\mathrm{deg} (\varrho_{k, [r]}) = -1$. More precisely, $\varrho_{k, [r]} := (-1)^{\frac{k(k-1)}{2}} ~s \circ \mu_{k, [r]} \circ (s^{-1})^{ \otimes k }$. Finally, the multiplications $\{ \varrho_{k, [1]}, \ldots, \varrho_{k, [k]} \}$ induces a degree $-1$ coderivation $\widetilde{\varrho_k}$ on the free diassociative coalgebra $\mathrm{Diass}^c(V) = TV \otimes V \otimes TV$. It has been shown that $(\mathcal{A}, \{ \mu_{k, [r]} \})$ is a  $\mathrm{Dend}_\infty$-algebra is equivalent to that the degree $-1$ coderivation
\begin{align*}
D = \widetilde{\varrho_1} + \widetilde{\varrho_2} + \cdots \in \mathrm{Coder}^{-1}(\mathrm{Diass}^c(V)) \quad \mathrm{  satisfies } \quad D^2 = 0.
\end{align*}

\begin{defn}
An involutive $\mathrm{Dend}_\infty$-algebra is a $\mathrm{Dend}_\infty$-algebra $(\mathcal{A}, \{ \mu_{k, [r]} \})$ together with an involution (i.e. a degree $0$ linear map $* : \mathcal{A} \rightarrow \mathcal{A}, ~ a \mapsto a^*$ satisfying $a^{**} = a$) that satisfies
\begin{align}\label{inv-dend-inf-iden}
\mu_{k, [r]}(a_1, \ldots, a_k)^*  = (-1)^\theta (-1)^{\frac{(n-1)(n-2)}{2}} ~\mu_{k, [k-r+1]} (a_k^*, \ldots, a_1^*), ~ \text{ for } 1 \leq k < \infty ~ \text{ and } [r] \in C_k,
\end{align}
where $\theta = \sum_{i=1}^k |a_i| (\sum_{j=i+1}^k |a_j| )$ appears from the Koszul sign convension.
\end{defn}

An involutive $\mathrm{Dend}_\infty$-algebra whose underlying graded vector space $\mathcal{A}$ is concentrated in degree $0$ is nothing but an involutive dendriform algebra.

\begin{remark}
An involution on the graded vector space $\mathcal{A}$ gives rise to an involution on $V = s \mathcal{A}$ that induces an involution on the free diassociative coalgebra $\mathrm{Diass}^c(V)$. Then $(\mathcal{A}, \{ \mu_{k, [r]} \}, * )$ is an involutive $\mathrm{Dend}_\infty$-algebra if and only if $D = \widetilde{\varrho_1} + \widetilde{\varrho_2} + \cdots $ is a square-zero coderivation on $\mathrm{Diass}^c(V)$ that preserves the involution, i.e. $D(x^*) = D(x)^*$, for $x \in \mathrm{Diass}^c(V)$.
\end{remark}

Involutive $A_\infty$-algebras was introduced was introduced in \cite{braun} as an $A_\infty$-algebra $(\mathcal{A}, \{ \mu_k \})$ together with an involution that satisfies the identity similar to (\ref{inv-dend-inf-iden}), i.e.
\begin{align*}
\mu_{k}(a_1, \ldots, a_k)^*  = (-1)^\theta (-1)^{\frac{(n-1)(n-2)}{2}} ~\mu_{k} (a_k^*, \ldots, a_1^*), ~ \text{ for } 1 \leq k < \infty.
\end{align*}
Hence as a consequence of \cite[Theorem 4.8]{das}, we get the following.

\begin{prop}
Let $(\mathcal{A}, \{ \mu_{k, [r]} \}, *)$ be an involutive $\mathrm{Dend}_\infty$-algebra. Then $(\mathcal{A}, \{ \mu_{k} \}, *)$ is an involutive $A_\infty$-algebra, where $\mu_k :=  \mu_{k, [1]} + \cdots + \mu_{k, [k]}$, for $k \geq 1$.
\end{prop}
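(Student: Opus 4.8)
The plan is to separate the claim into two independent parts: that the summed operations $\{\mu_k\}$ satisfy the $A_\infty$-relations, and that they are compatible with the involution $*$ in the required (twisted, order-reversing) manner. The first part is precisely the content of \cite[Theorem 4.8]{das}, which asserts that setting $\mu_k = \mu_{k,[1]} + \cdots + \mu_{k,[k]}$ turns any $\mathrm{Dend}_\infty$-algebra into an $A_\infty$-algebra; I would simply invoke this result, since the defining $\mathrm{Dend}_\infty$-identities are obtained by decorating the $A_\infty$-identities with the combinatorial maps $R_0, R_\lambda$, and summing over the decorations collapses them to the undecorated $A_\infty$-identities. Thus the only genuinely new point is the involution compatibility.

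For the involution, I would start from the defining identity (\ref{inv-dend-inf-iden}) for each decoration $[r] \in C_k$ and sum over $r$. Since $*$ is linear, $\mu_k(a_1, \ldots, a_k)^* = \sum_{r=1}^k \mu_{k,[r]}(a_1, \ldots, a_k)^*$, and substituting (\ref{inv-dend-inf-iden}) termwise gives
\[
\mu_k(a_1, \ldots, a_k)^* = \sum_{r=1}^k (-1)^\theta (-1)^{\frac{(k-1)(k-2)}{2}}\, \mu_{k,[k-r+1]}(a_k^*, \ldots, a_1^*).
\]
The crucial observation is that the sign prefactor $(-1)^\theta (-1)^{\frac{(k-1)(k-2)}{2}}$ is independent of $r$: it depends only on $k$ and on the degrees $|a_1|, \ldots, |a_k|$, which are unaffected by $*$ since the involution has degree $0$. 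Hence the prefactor pulls out of the sum, leaving $\sum_{r=1}^k \mu_{k,[k-r+1]}(a_k^*, \ldots, a_1^*)$.

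The final step is a reindexing: as $r$ runs over $\{1, \ldots, k\}$ the mirror index $k-r+1$ runs over the same set, so this bijection of $C_k$ lets me rewrite the sum as $\sum_{s=1}^k \mu_{k,[s]}(a_k^*, \ldots, a_1^*) = \mu_k(a_k^*, \ldots, a_1^*)$. Combining the two displays yields exactly the involutive $A_\infty$-identity for $\mu_k$, completing the argument. I do not expect a serious obstacle here; the whole point is that the twist in (\ref{inv-dend-inf-iden}) simultaneously reverses the inputs \emph{and} sends each decoration $[r]$ to its mirror $[k-r+1]$, so that passing to the total operation $\mu_k$ is manifestly preserved. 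The one point that warrants care is confirming that the exponent $\theta$ is literally the same in the $\mathrm{Dend}_\infty$ and the $A_\infty$ involution identities, which holds because $*$ is degree-preserving and therefore leaves all the relevant Koszul signs unchanged.
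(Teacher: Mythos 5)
Your argument is correct and is exactly the route the paper takes: the paper simply states the result as a consequence of \cite[Theorem 4.8]{das} together with the (unwritten) observation that summing the identity (\ref{inv-dend-inf-iden}) over $[r]\in C_k$ and reindexing via the bijection $[r]\mapsto[k-r+1]$ yields the involutive $A_\infty$-identity. Your write-up just makes explicit the details the paper leaves to the reader, including the correct observation that the sign prefactor is independent of $r$.
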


\begin{remark}\label{remark-inf-cohomo}
It is known that skeletal $\mathrm{Dend}_\infty$-algebras are closely related to the ordinary cohomology of dendriform algebras \cite{das}. In the same manner, one can easily show that skeletal involutive $\mathrm{Dend}_\infty$-algebras are related to the cohomology of involutive dendriform algebras introduced in Subsection \ref{subsec-inv-cohomo}. Since the proof is along to the line of above-mentioned reference, we do not repeat it here.
\end{remark}

Rota-Baxter operators on $A_\infty$-algebras was introduced in \cite{das} that induce $\mathrm{Dend}_\infty$-algebra structures. Here we extend it to the involutive case.

\begin{defn}
Let $(\mathcal{A}, \{ \mu_k \}, *)$ be an involutive $A_\infty$-algebra.  A degree $0$ linear map $R : \mathcal{A} \rightarrow \mathcal{A}$ is said to be a Rota-Baxter operator if $R$ satisfies $R(a^*) = R(a)^*$ and
\begin{align*}
R (\mu_k (a_{1}, \ldots, a_k)) =  R \bigg( \sum_{i=1}^k  \mu_k (R(a_1), \ldots, R(a_{i-1}), a_i, R(a_{i+1}), \ldots, R(a_k)) \bigg), ~ \text{ for } k \geq 1. 
\end{align*}
\end{defn}

\begin{prop}
Let $R$ be a Rota-Baxter operator on an involutive $A_\infty$-algebra $(\mathcal{A}, \{ \mu_k \}, *)$. Then $(\mathcal{A}, \{ \mu_{k, [r]} \}, *)$ is an involutive $\mathrm{Dend}_\infty$-algebra, where
\begin{align*}
\mu_{k, [r]} (a_1, \ldots, a_k ) :=  \mu_k ( R(a_1), \ldots, R(a_{r-1}), a_r, R(a_{r+1}), \ldots, R(a_k)), \text{ for } k \geq 1 \text{ and } [r] \in C_k.
\end{align*}
\end{prop}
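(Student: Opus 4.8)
The plan is to separate the claim into two independent pieces: first that $(\mathcal{A}, \{\mu_{k,[r]}\})$ is a $\mathrm{Dend}_\infty$-algebra, and second that the involution $*$ is compatible with these operations in the sense of (\ref{inv-dend-inf-iden}). For the first piece I would invoke the non-involutive statement already recorded in \cite{das}: a Rota-Baxter operator $R$ on an $A_\infty$-algebra $(\mathcal{A}, \{\mu_k\})$ induces a $\mathrm{Dend}_\infty$-structure given by exactly the formula defining $\mu_{k,[r]}$. Since the $\mathrm{Dend}_\infty$-identities make no reference to $*$, nothing new has to be checked here, and the whole substance of the proposition reduces to verifying the single identity (\ref{inv-dend-inf-iden}) for the operations $\mu_{k,[r]}$.

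For the second piece I would compute $\mu_{k,[r]}(a_1, \ldots, a_k)^*$ by hand. Unwinding the definition gives $\mu_{k,[r]}(a_1, \ldots, a_k)^* = \mu_k(R(a_1), \ldots, R(a_{r-1}), a_r, R(a_{r+1}), \ldots, R(a_k))^*$, and applying the involution identity satisfied by the $A_\infty$-operation $\mu_k$ converts the right-hand side into a sign times $\mu_k$ evaluated on the reversed, starred list of arguments. The crucial simplification is that $R$ has degree $0$, so each input $R(a_i)$ carries the same degree $|a_i|$; consequently the Koszul sign produced by reversing the list $(R(a_1), \ldots, a_r, \ldots, R(a_k))$ coincides with the sign $\theta = \sum_{i=1}^k |a_i|\bigl(\sum_{j=i+1}^k |a_j|\bigr)$ attached to $(a_1, \ldots, a_k)$, and the quadratic sign $(-1)^{\frac{(k-1)(k-2)}{2}}$ carries over verbatim.

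I would then use the hypothesis $R(a^*) = R(a)^*$ to push the star through each occurrence of $R$, turning the reversed entries $R(a_i)^*$ into $R(a_i^*)$ for $i \neq r$ while leaving $a_r^*$ (without an $R$) in its reversed position. The key combinatorial observation is that in the reversed argument list $(a_k^*, \ldots, a_1^*)$ the unique $R$-free slot, namely $a_r^*$, now occupies position $k-r+1$; by the definition of $\mu_{k,[k-r+1]}$ the resulting expression is precisely $\mu_{k,[k-r+1]}(a_k^*, \ldots, a_1^*)$. Assembling the accumulated signs reproduces exactly the right-hand side of (\ref{inv-dend-inf-iden}), completing the verification.

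The only genuinely delicate point I anticipate is this index bookkeeping---confirming that reversing the arguments moves the $R$-free slot from position $r$ to position $k-r+1$---together with the observation that the degree-$0$ nature of $R$ leaves the Koszul sign $\theta$ invariant under the substitution $a_i \mapsto R(a_i)$. Neither presents a real obstacle once stated carefully, so the argument is essentially a direct, sign-tracking computation resting on the two defining properties of a Rota-Baxter operator, $R(a^*)=R(a)^*$ and $\deg R = 0$.
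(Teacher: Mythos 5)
Your proposal is correct and follows essentially the same route as the paper: the $\mathrm{Dend}_\infty$-structure is imported from \cite[Theorem 5.2]{das}, and the involution identity (\ref{inv-dend-inf-iden}) is verified by applying the $A_\infty$-involution identity to the list $(R(a_1),\ldots,a_r,\ldots,R(a_k))$, pushing $*$ through $R$ via $R(a)^*=R(a^*)$, and observing that the $R$-free slot lands at position $k-r+1$. Your explicit remark that $\deg R=0$ keeps the Koszul sign $\theta$ unchanged is a detail the paper leaves implicit, but the argument is the same.
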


\begin{proof}
The pair $(\mathcal{A}, \{ \mu_{k, [r]} \})$ is a $\mathrm{Dend}_\infty$-algebra is proved in \cite[Theorem 5.2]{das}. Moreover, we have
\begin{align*}
\mu_{k, [r]} (a_1, \ldots, a_k)^* =~& \mu_k ( R(a_1), \ldots, R(a_{r-1}), a_r, R(a_{r+1}), \ldots, R(a_k))^* \\
=~& (-1)^\theta (-1)^{\frac{(n-1)(n-2)}{2}}~ \mu_k (R(a_k^*), \ldots, R(a_{r+1}^*), a_r^*, R(a_{r-1}^*), \ldots, R(a_1^*)) \\
=~& (-1)^\theta (-1)^{\frac{(n-1)(n-2)}{2}}~ \mu_{k,[k-r+1]} (a_k^*, \ldots, a_1^*),~\text{ for } [r] \in C_k.
\end{align*}
Hence we have verified the identity (\ref{inv-dend-inf-iden}).
\end{proof}

\subsection{Involutions in other Loday-type algebras}
There are other algebras (than dendriform algebras) introduced by Loday and his collaborators in their study of algebras arising in combinatorics. Associative dialgebras, associative trialgebras, dendriform trialgebras, quadri-algebras, ennea-algebras are examples of such algebras \cite{loday,loday-ronco,aguiar-loday,leroux}. In \cite{das-loday} it has been shown that all these algebras can be described by multiplications in suitable operads. These algebras are called Loday-algebras. More precisely, for any fixed type of Loday-algebras, there is a collection of non-empty sets $\{ U_n \}_{n \geq 1}$ and a collection of `nice' functions 
\begin{align*}
R_0 (m ; \overbrace{1, \ldots,1, \underbrace{n}_{i\text{-th}} ,1, \ldots, 1 }^m) :~& U_{m+n-1} \rightarrow U_m, \\
R_i (m ; \overbrace{1, \ldots,1, \underbrace{n}_{i\text{-th}} ,1, \ldots, 1}^m ) :~& U_{m+n-1} \rightarrow \mathbb{K}[U_n], \quad 1 \leq i \leq m
\end{align*}
so that the spaces $\{ \mathcal{O}(n) = \mathrm{Hom}(\mathbb{K}[U_n] \otimes A^{\otimes n}, A) \}_{n \geq 1}$ forms a non-symmetric operad with partial compositions
\begin{align*}
&(f \circ_i g)(r; a_1, \ldots, a_{m+n-1}) \\
&=  f (R_0 (m ; 1, \ldots, n , \ldots, 1 )r;~ a_1, \ldots, a_{i-1}, g (R_i (m ; 1, \ldots, n , \ldots, 1 ) r;~ a_i, \ldots, a_{i+n-1}), \ldots, a_{m+n-1}),
\end{align*}
for $f \in \mathcal{O}(m),$ $g \in \mathcal{O}(n)$ and $r \in U_{m+n-1}$. In case of dendriform algebras, $U_n = C_n$ and the functions $\{ R_0, R_i \}$ are given by (\ref{r-0}) and (\ref{r-i}). For other types of Loday-algebras, the corresponding sets $\{ U_n \}_{n \geq 1}$ are given in the table below, and the corresponding structure functions $\{ R_0, R_i \}$ are explicitly given in \cite{das-loday}.  Let `Lod' denotes either of the above types of Loday-algebras. Then a Lod-algebra structure on a vector space $A$ is equivalent to a multiplication $\pi$ on the above operad $\mathcal{O}$. The cohomology of the Lod-algebra $A$ is then defined by the cohomology induced from the multiplication $\pi$ in the operad $\mathcal{O}$. 

In all these Loday-algebras, there is a suitable set theoretic map $* : U_n \rightarrow U_n$ (for each $n \geq 1$) satisfying $* * = \mathrm{id}$. For various types of Loday-algebras mentioned above, the corresponding sets $\{ U_n \}_{n \geq 1}$ and the maps $* : U_n \rightarrow U_n$ are given by the following:

\medskip

\begin{center}
\begin{tabular}{|c|c|c|}
\hline
Type of algebras & $U_n$ & $* : U_n \rightarrow U_n$ \\ \hline \hline
associative dialgebras & $PBT_n $ (planar binary trees with $n$ vertices) & $* (t) = t^T$ (mirror reflection of $t$)\\ \hline
associative trialgebras & $PT_n$ (planar trees with $n$ vertices) & $* (t) = t^T$ (mirror reflection of $t$) \\ \hline
dendriform algebras & $C_n$ & $* ([r]) = [n-r+1]$ \\ \hline
dendriform trialgebras & $P_n =$ nonempty subsets of $\{1, \ldots, n \}$ &  $* (S) =\begin{cases} S^c ~ &\mathrm{~if ~} S \neq \{1,\ldots, n \} \\
S ~ &\mathrm{~if ~} S = \{1,\ldots, n \}  \end{cases} $ \\
\hline
quadri-algebras & $C_n \times C_n$ & $*([r],[r'])= (*([r]), *([r']) )$ \\ \hline
ennea-algebras & $ P_n \times P_n $ & $* (S, S') = (*(S), *(S')) $ \\ \hline
\end{tabular}
\end{center}

\medskip

\noindent Using the maps $*$ on $\{ U_n \}_{n \geq 1}$, we are now in a position to define involutive Lod-algebras. Let $A$ be a fixed Lod-algebra with structure functions $\{ R_0, R_i \}$ defined on sets $\{ U_n \}_{n \geq 1}$. Let $\pi \in \mathcal{O}(2) = \mathrm{Hom}( \mathbb{K}[U_2] \otimes A^{\otimes 2}, A)$ denote the corresponding multiplication. An involution on the Lod-algebra $A$ is given by a vector space involution $* : A \rightarrow A$ satisfying
\begin{align*}
\pi (r; a, b )^* = \pi ( *r; b^*, a^*),\text{ for } r \in U_2, a, b \in A.
\end{align*}

Let $(A, \pi, *)$ be an involutive Lod-algebra. Define
\begin{align*}
iC^n (A, A) = \{ f \in \mathcal{O}(n) |~ f (r; a_1, \ldots, a_n )^*  = (-1)^{\frac{(n-1)(n-2)}{2}}~ f (*r; a_n^*, \ldots, a_1^*),~ \forall r \in U_n \}.
\end{align*}
Then similar to Proposition \ref{inv-sub-comp}, one can show that $( \{ iC^n (A, A) \}_{n \geq 1}, \delta_\pi )$ is a subcomplex of the cochain complex $(\{ \mathcal{O}(n) \}_{n \geq 1}, \delta_\pi )$. The corresponding cohomology groups are called the cohomology of the involutive Lod-algebra $A$, and denoted by $iH^\bullet (A, A).$

Note that in this discussion we only consider (involutive) cohomology of Lod-algebras with coefficients in itself. The more general cohomology of Lod-algebras with coefficients in a representation is described in \cite{das-loday}. Using the approach of Subsection \ref{subsec-inv-cohomo}, we can define involutive representations of involutive Lod-algebras and their cohomology with coefficients in an involutive representation.

Moreover, similar to Theorem \ref{thm-decom}, one can show that for an involutive Lod-algebra $A$, the ordinary cohomology of the Lod-algebra $A$ splits as a direct sum of the involutive cohomology and a skew-factor. Since the proof is similar to Theorem \ref{thm-decom}, we do not repeat it here. 

\medskip

Given a quadratic Koszul operad $\mathcal{P}$, the notion of homotopy $\mathcal{P}$-algebra ($\mathcal{P}_\infty$-algebra in short) was given in \cite{loday-val} as an algebra over the Koszul resolution $\Omega \mathcal{P}^{\mathrm{i}}$, here $\mathcal{P}^{\mathrm{i}}$ is the Koszul dual of $\mathcal{P}$. However, sometimes it is hard to explicitly describe homotopy $\mathcal{P}$-algebras as the operad $\mathcal{P}^{\mathrm{i}}$ may be hard to explicitly write. For instance, the dual quadri operad is not very explicitly known. However, when a $\mathcal{P}$-algebra structure on a vector space $A$ is given by a multiplication in a non-symmetric operad $\{ \mathcal{O}(n) = \mathrm{Hom}(\mathbb{K}[U_n] \otimes A^{\otimes n }, A) \}$, the dual operad $\mathcal{P}^\mathrm{i}$ is already hidden in the description of $\mathcal{O}$. More precisely, it has been observed in \cite{das-loday} that $\mathcal{P}^\mathrm{i}(n) = \mathbb{K}[U_n]$, for $n \geq 1$. This relation between $\mathcal{P}^\mathrm{i}$ and the sets $\{ U_n \}_{n \geq 1}$ suggests to describe homotopy {Lod}-algebras as follows. This in particular recover $\mathrm{Dend}_\infty$-algebras when we consider the fixed Lod-algebra to be dendriform algebra.

\begin{defn}
A homotopy Lod-algebra is a graded vector space $\mathcal{A} = \oplus A_i$ together with a collection $\{  \mu_{k, r} : \mathcal{A}^{\otimes k} \rightarrow \mathcal{A}|~ 1 \leq k < \infty, r \in U_k \}$
of multilinear maps with $\mathrm{deg}(\mu_{k, r}) =k-2$ satisfying the following identities: for each $n \geq 1$ and $r \in U_n$,
\begin{align*}
\sum_{i+j= n+1} \sum_{\lambda = 1}^j~(-1)^{\lambda (i+1) + i (|a_1|+ \cdots + |a_{\lambda -1}|) } ~\mu_{j, R_0(j;1, \ldots, i, \ldots, 1)r} \big( a_1, \ldots, a_{\lambda -1},
\mu_{i, R_\lambda (j;1, \ldots, i, \ldots, 1)r} (a_\lambda, \ldots, a_{\lambda + i -1} ),\\ a_{\lambda + i}, \ldots, a_n \big) = 0, 
\end{align*}
for $a_1, \ldots, a_n \in \mathcal{A}.$
\end{defn}

An involutive homotopy {Lod}-algebra is a homotopy {Lod}-algebra $(\mathcal{A}, \mu_k)$ together with an involution $* : \mathcal{A} \rightarrow \mathcal{A},~ a \mapsto a^*$ on the underlying graded vector space $\mathcal{A}$ satisfying
\begin{align*}
\mu_{k, r}(a_1, \ldots, a_k)^*  = (-1)^\theta (-1)^{\frac{(n-1)(n-2)}{2}} ~\mu_{k, *r} (a_k^*, \ldots, a_1^*), ~ \text{ for } 1 \leq k < \infty ~ \text{ and } r \in U_k,
\end{align*}
where $\theta = \sum_{i=1}^k |a_i| (\sum_{j=i+1}^k |a_j| ).$

In the following sections, we mainly emphasize on oriented dendriform algebras, study their cohomology and deformations.
One may extend these results to other Lod-algebras equipped with orientations.

\section{Oriented dendriform algebras}\label{section-ori-dend}

Oriented associative algebras were introduced in \cite{koam-pira} as algebra with an action of an oriented group. In this section, we exhibit some further properties of oriented algebras and then introduce oriented dendriform algebras. 

\subsection{Oriented associative and Lie algebras}
Let $G$ be a group and $\varepsilon : G \rightarrow \{ \pm 1\}$ be a group homomorphism. A pair $(G, \varepsilon)$ is called an oriented group and $\varepsilon$ is called the orientation.

\begin{defn}
An oriented associative algebra over $(G, \varepsilon)$ is an associative algebra $A$ together with an action $G \times A \rightarrow A, ~(g, a) \mapsto ga$ satisfying
\begin{align*}
g (ab) = (ga) (gb) ~~~~ \text{ if } \varepsilon (g) = 1 \qquad  \text{ and } \qquad  g (ab) = (gb) (ga) ~~~~ \text{ if } \varepsilon (g) = -1.
\end{align*}
\end{defn}

Let $A$ and $B$ be oriented algebras over $(G, \varepsilon)$. A morphism between them is an algebra morphism $f : A \rightarrow B$ that is $G$-equivariant. We denote the category of oriented associative algebras over $(G, \varepsilon)$ by $\mathrm{\bf OAss}.$

Let $(G, \varepsilon)$ be an oriented group and $V$ be a vector space. Suppose there is an action of $G$ on $V$. Consider the tensor algebra $T(V) = \oplus_n V^{\otimes n}$ over $V$ with the concatenation product. Define an action of $G$ on $T(V)$ by
\begin{align*}
g (v_1 \otimes \cdots \otimes v_n ) = \begin{cases} gv_1 \otimes \cdots \otimes gv_n ~~~~ \text{ if } \varepsilon (g) = 1\\
gv_n \otimes \cdots \otimes gv_1 ~~~~ \text{ if } \varepsilon (g) = -1. \end{cases}
\end{align*}
Then $T(V)$ is an oriented associative algebra over $(G, \varepsilon).$

Let $V$ be a vector space equipped with a $G$-action. The free oriented associative algebra over $V$ is an oriented associative algebra $\mathcal{F}(V)$ over $(G, \varepsilon)$ equipped with a $G$-equivariant linear map $i : V \rightarrow \mathcal{F}(V)$ that satisfies the following universal condition:
for any oriented associative algebra $A$ over $(G, \varepsilon)$ and a $G$-equivariant linear map $f : V \rightarrow A$, there exists an unique oriented associative algebra morphism $\widetilde{f} : \mathcal{F}(V ) \rightarrow A$ such that $\widetilde{f} \circ i = f .$

\begin{prop}
Let $V$ be a vector space equipped with a $G$-action. Then $T(V)$ is a free oriented algebra over $V$.
\end{prop}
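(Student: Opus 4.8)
The plan is to exhibit the structure map $i$, invoke the ordinary universal property of the tensor algebra to obtain a unique \emph{algebra} extension $\widetilde{f}$, and then check that this extension is automatically $G$-equivariant; the equivariance check is exactly the point where the orientation twist built into the $G$-action on $T(V)$ matches the twist in the defining axioms of an oriented associative algebra. First I would take $i : V \rightarrow T(V)$ to be the canonical inclusion of $V = V^{\otimes 1}$ into $T(V)$. This is $G$-equivariant, since on $V^{\otimes 1}$ the twisted action collapses to the original action on $V$ regardless of the sign of $\varepsilon(g)$: one has $g \cdot i(v) = gv = i(gv)$ for all $g \in G$, $v \in V$.

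Next, given an oriented associative algebra $A$ over $(G, \varepsilon)$ and a $G$-equivariant linear map $f : V \rightarrow A$, I would forget the $G$-actions and apply the universal property of $T(V)$ as the free associative algebra on $V$. This produces a unique associative algebra morphism $\widetilde{f} : T(V) \rightarrow A$ with $\widetilde{f} \circ i = f$, given explicitly by $\widetilde{f}(v_1 \otimes \cdots \otimes v_n) = f(v_1) \cdots f(v_n)$, the product being taken in $A$. Uniqueness as an oriented algebra morphism is then immediate, since any oriented algebra morphism extending $f$ is in particular an algebra morphism extending $f$, and there is exactly one such.

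The main step, and the only place where the orientation enters nontrivially, is verifying that $\widetilde{f}$ is $G$-equivariant. I would first establish, by induction on $n$ and using the oriented algebra axioms of $A$, the two auxiliary identities $g(b_1 \cdots b_n) = (gb_1) \cdots (gb_n)$ when $\varepsilon(g) = 1$ and $g(b_1 \cdots b_n) = (gb_n) \cdots (gb_1)$ when $\varepsilon(g) = -1$, for $b_1, \ldots, b_n \in A$; the case $n = 2$ is the defining relation, and the inductive step treats $b_1 \cdots b_{n-1}$ as a single element. With these in hand, for $\varepsilon(g) = 1$ one computes $\widetilde{f}(g(v_1 \otimes \cdots \otimes v_n)) = f(gv_1) \cdots f(gv_n) = (gf(v_1)) \cdots (gf(v_n)) = g(f(v_1) \cdots f(v_n)) = g\, \widetilde{f}(v_1 \otimes \cdots \otimes v_n)$, using $G$-equivariance of $f$ and then the first auxiliary identity. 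For $\varepsilon(g) = -1$ the twisted action reverses the factors, giving $\widetilde{f}(g(v_1 \otimes \cdots \otimes v_n)) = f(gv_n) \cdots f(gv_1) = (gf(v_n)) \cdots (gf(v_1)) = g(f(v_1) \cdots f(v_n))$ by the second auxiliary identity, which again equals $g\, \widetilde{f}(v_1 \otimes \cdots \otimes v_n)$. I expect this inductive matching of the two order conventions to be the only real obstacle; everything else is formal.
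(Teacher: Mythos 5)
Your proof is correct. The paper actually states this proposition without any proof, so there is nothing to compare against; your argument --- taking $i$ to be the canonical inclusion, invoking ordinary freeness of $T(V)$ to get the unique algebra extension $\widetilde{f}(v_1\otimes\cdots\otimes v_n)=f(v_1)\cdots f(v_n)$, and then checking $G$-equivariance via the inductively established identities $g(b_1\cdots b_n)=(gb_1)\cdots(gb_n)$ for $\varepsilon(g)=1$ and $g(b_1\cdots b_n)=(gb_n)\cdots(gb_1)$ for $\varepsilon(g)=-1$ --- is exactly the standard argument the authors evidently had in mind, and you have correctly identified the order-reversal matching as the only point of substance.
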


\begin{defn}
An oriented Lie algebra over $(G, \varepsilon)$ consists of a Lie algebra $(\mathfrak{g}, [~, ~])$ with an action $G \times \mathfrak{g} \rightarrow \mathfrak{g}, ~ ( g, x) \mapsto gx$ satisfying
\begin{align*}
g [ x,y] =  [gx, gy] ~~~~ \text{ if } \varepsilon(g) = 1  \qquad  \text{ and } \qquad
g[x,y] = [gy, gx]   ~~~~ \text{ if } \varepsilon(g) =- 1, ~~~ \text{ for } x, y \mathfrak{g}.
\end{align*}
\end{defn}

An oriented Lie algebras over $(G = \{ \pm 1 \}, \varepsilon = \mathrm{id})$ is given by a Lie algebra $\mathfrak{g}$ together with a Lie algebra anti-homomorphism.

A morphism between oriented Lie algebras over $(G, \varepsilon)$ can be defined similarly. We denote the category of oriented Lie algebras and morphisms between them by $\mathrm{\bf OLie}$.

Let $A$ be an oriented associative algebra over $(G, \varepsilon)$. Then it follows that the commutator Lie bracket $[a, b]_c = ab - ba$ on $A$ makes $A$ into an oriented Lie algebra over $(G, \varepsilon)$. This construction is functorial. Therefore, there is a functor $(~)_c : \mathrm{\bf OAss} \rightarrow \mathrm{\bf OLie}$ from the category of oriented associative algebras to the category of oriented Lie algebras. In the following, we show that the universal enveloping algebra of an oriented Lie algebra gives rise to a functor $U$ left adjoint to $(~)_c$.

Let $( \mathfrak{g}, [~, ~])$ be a Lie algebra. Consider the tensor algebra $T ( \mathfrak{g})$ of $\mathfrak{g}$. Then the universal enveloping algebra $U (\mathfrak{g})$ of $\mathfrak{g}$ is the quotient of $T (\mathfrak{g})$ by the two sided ideal generated by elements of the form $x \otimes y -  y \otimes x - [x, y]$, for $x, y \in \mathfrak{g}$. Let $( \mathfrak{g}, [~, ~])$ be an oriented Lie algebra over $(G, \varepsilon)$. Then the oriented associative algebra structure on $T ( \mathfrak{g})$ induces an orientation on $U ( \mathfrak{g})$ as 
\begin{align*}
 g (x \otimes y -  y \otimes x - [x, y]) = \begin{cases} gx \otimes gy -  gy \otimes gx - [gx, gy] \qquad \varepsilon (g) = 1 \\ gy \otimes gx -  gx \otimes gy - [gy, gx] \qquad \varepsilon (g) = -1. \end{cases}
\end{align*}
Therefore, $U(\mathfrak{g})$ is an oriented associative algebra with the induced orientation.

\begin{prop}
The functor $U : \mathrm{\bf OLie} \rightarrow \mathrm{\bf OAss}$ is left adjoint to the functor $(~)_c : \mathrm{\bf OAss} \rightarrow \mathrm{\bf OLie}$. In other words, there is an isomorphism
\begin{align*}
\mathrm{Hom}_{\mathrm{\bf OAss}} (U(\mathfrak{g}), A) \cong \mathrm{Hom}_{\mathrm{\bf OLie}} ( \mathfrak{g}, A_c),
\end{align*}
for any oriented associative algebra $A$ and oriented Lie algebra $\mathfrak{g}$ over $(G, \varepsilon).$
\end{prop}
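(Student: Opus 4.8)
The plan is to reduce the statement to the classical universal enveloping algebra adjunction between associative and Lie algebras, and then to check that under the extra $G$-action the classical bijection of Hom-sets restricts to an isomorphism between the subsets of $G$-equivariant morphisms. Recall that the canonical inclusion $\iota : \mathfrak{g} \to U(\mathfrak{g})$ (the composite of $\mathfrak{g} \hookrightarrow T(\mathfrak{g})$ with the projection onto the quotient) is a Lie algebra morphism into $U(\mathfrak{g})_c$, and that for every associative algebra $A$ and every Lie algebra morphism $f : \mathfrak{g} \to A_c$ there is a unique associative algebra morphism $\widetilde{f} : U(\mathfrak{g}) \to A$ with $\widetilde{f} \circ \iota = f$, given on products by $\widetilde{f}(x_1 \cdots x_n) = f(x_1) \cdots f(x_n)$. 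I would define $\Phi(F) = F_c \circ \iota$ and $\Psi(f) = \widetilde{f}$, which are mutually inverse at the level of plain (non-oriented) morphisms, and show that each restricts to the oriented subcategories.

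For $\Phi$: since $\iota$ maps $\mathfrak{g}$ into the degree-one part of $U(\mathfrak{g})$, where the induced $G$-action (defined in the paragraph preceding the proposition) agrees with the original action on $\mathfrak{g}$ --- reversing a single tensor factor does nothing --- the map $\iota$ is automatically $G$-equivariant, i.e. a morphism in $\mathrm{\bf OLie}$. Hence if $F$ is a morphism of oriented associative algebras then $\Phi(F) = F_c \circ \iota$ is $G$-equivariant, so it lands in $\mathrm{Hom}_{\mathrm{\bf OLie}}(\mathfrak{g}, A_c)$.

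The main step, and the only place the orientation genuinely enters, is to show that $\Psi(f) = \widetilde{f}$ is $G$-equivariant whenever $f$ is. It suffices to check this on a product $\widetilde{f}(x_1 \cdots x_n) = f(x_1) \cdots f(x_n)$ and to split into the two cases $\varepsilon(g) = \pm 1$. When $\varepsilon(g) = 1$ both the $G$-action on $U(\mathfrak{g})$ and the oriented multiplication in $A$ preserve the order, so $G$-equivariance of $f$ gives $\widetilde{f}(g\cdot(x_1 \cdots x_n)) = f(gx_1)\cdots f(gx_n) = g\cdot\widetilde{f}(x_1 \cdots x_n)$. When $\varepsilon(g) = -1$ the $G$-action reverses the order of the factors in $U(\mathfrak{g})$, while the defining relation $g(ab) = (gb)(ga)$ in $A$, applied inductively, also reverses the order of an $n$-fold product; the two reversals match up so that $\widetilde{f}(g\cdot(x_1 \cdots x_n)) = f(gx_n)\cdots f(gx_1) = g\cdot(f(x_1)\cdots f(x_n)) = g\cdot\widetilde{f}(x_1 \cdots x_n)$. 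Thus $\widetilde{f}$ is a morphism of oriented associative algebras; here one uses that the $G$-action descends to $U(\mathfrak{g})$, which was already verified before the proposition by checking that the generating relations of the defining ideal are $G$-stable. I expect this compatibility of the two order reversals in the $\varepsilon(g) = -1$ case to be the only subtle point.

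Finally, since $\Phi$ and $\Psi$ are inverse to each other as maps of plain Hom-sets and each carries oriented morphisms to oriented morphisms, they restrict to a bijection $\mathrm{Hom}_{\mathrm{\bf OAss}}(U(\mathfrak{g}), A) \cong \mathrm{Hom}_{\mathrm{\bf OLie}}(\mathfrak{g}, A_c)$. Naturality in $A$ and in $\mathfrak{g}$ is inherited from the classical adjunction, since the forgetful functors to plain algebras are faithful and the relevant structure maps are unchanged; this upgrades the bijection to the asserted adjunction.
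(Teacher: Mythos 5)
Your proposal is correct and follows essentially the same route as the paper: restriction along $\iota$ in one direction, and extension through the tensor algebra followed by descent to $U(\mathfrak{g})$ in the other. The only difference is that you verify the $G$-equivariance of the extension explicitly via the case split on $\varepsilon(g)$, whereas the paper delegates exactly this point to its earlier proposition that $T(V)$ is the free oriented associative algebra.
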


\begin{proof}
For any oriented associative algebra morphism $f : U(\mathfrak{g}) \rightarrow A$, we consider its restriction to $\mathfrak{g}$. This is an oriented Lie algebra morphism $\mathfrak{g} \rightarrow A_c$.

Conversely, for any oriented Lie algebra morphism $h : \mathfrak{g} \rightarrow A_c$, we consider its unique extension as an oriented associative algebra morphism $\widetilde{h} : T (\mathfrak{g}) \rightarrow A$. It induces a map of oriented associative algebras $\widetilde{\widetilde{h}} : U (\mathfrak{g}) \rightarrow A$ as $h$ is a morphism of oriented Lie algebras.

The above two correspondences are inverses to each other. Hence the proof.
\end{proof}

\subsection{Oriented dendriform algebras}
Here we introduce an oriented version of dendriform algebras and show that Rota-Baxter operator on oriented associative algebras gives rise to oriented dendriform algebras.

\begin{defn}
An oriented dendriform algebra over $(G, \varepsilon)$ is a dendriform algebra $(D, \prec, \succ)$ together with an action $G \times D \rightarrow D, ~(g, a) \mapsto ga$ satisfying
\begin{align*}
 g ( a \prec b) = \begin{cases} ga \prec gb \quad \varepsilon (g) = + 1 \\
 gb \succ ga \quad \varepsilon (g) = - 1, \end{cases} \quad  g ( a \succ b) = \begin{cases} ga \succ gb \quad \varepsilon (g) = + 1 \\
 gb \prec ga \quad \varepsilon (g) = - 1. \end{cases}
\end{align*}
\end{defn}

Let $D$ and $D'$ be two oriented dendriform algebras over $(G, \varepsilon)$. A morphism between them consists of a dendriform algebra morphism $\psi : D \rightarrow D'$ (i.e. $D (a \prec b) = D(a) \prec D(b)$ and $D (a \succ b) = D(a) \succ D(b)$) satisfying $\psi (g a) = g \psi (a)$, for all $a , b \in D$ and $g \in G$.

\begin{exam}
\begin{itemize}
\item[(i)] When the oriented group $(G, \varepsilon)$ is given by the trivial orientation (i.e. $\varepsilon(G) = +1$), an oriented dendriform algebra over $(G, \varepsilon)$ is nothing but a dendriform algebra equipped with an action of $G$ by algebra automorphisms. Such dendriform algebras are called $G$-equivariant dendriform algebras in \cite{das-saha}.
\item[(ii)] For $G = \{ \pm 1 \}$ and $\varepsilon = \mathrm{id},$ an oriented dendriform algebra over $(G, \varepsilon)$ is a dendriform algebra $(D, \prec, \succ)$ together with an action by $-1$ whose square is identity. Thus, a dendriform algebra over $(G, \varepsilon)$ is nothing but an involutive dendriform algebra.
\item[(iii)] (Oriented MAX dendriform algebra) MAX dendriform structures appeared in noncommutative generalizations of the algebra of symmetric functions \cite{nov-thibon}. Let $X$ be an ordered set and let $X^\bullet$ be the set of all finite, non-empty words $x_1 \cdots x_m$ with alphabets $x_i$'s in $X$. For any word $a = x_1 \cdots x_m$, we denote by max$(a)$ the highest alphabet in $a$. Consider the free vector space $\mathbb{K}X^\bullet$ generated by $X^\bullet$. The MAX dendriform structure on $\mathbb{K}X^\bullet$ is defined on the basis as follows
\begin{align*}
a \prec b = \begin{cases} a \cdot b ~~ &\text{ if } \mathrm{max}(a) \geq \mathrm{max}(b) \\
0 ~~ &\text{ otherwise } \end{cases} \qquad
a \succ b = \begin{cases} a \cdot b ~~ &\text{ if } \mathrm{max}(a) < \mathrm{max}(b) \\
0 ~~ &\text{ otherwise}. \end{cases}
\end{align*} 
Let $(G, \varepsilon)$ be an oriented group. Suppose there is an action of $G$ on the oriented set $X$ that satisfies the followings: $x < y \Rightarrow  gx < gy$ if $\varepsilon (g) = +1$ and $x < y \Rightarrow  gx > gy$ if $\varepsilon (g) = -1$. The action of $G$ on $X$ extends to an action of $G$ on $\mathbb{K}X^\bullet$ defined on basis elements by $g(x_1 \cdots x_m ) = g(x_1) \cdots g(x_m)$ if $\varepsilon (g) = +1$ and $g(x_1 \cdots x_m ) = g(x_m) \cdots g(x_1)$ if $\varepsilon (g) = -1$. Then it is easy to see that the MAX dendriform algebra  $\mathbb{K}X^\bullet$ is an oriented dendriform algebra.
\end{itemize}
\end{exam}

Let $Y_n$ be the set of planar binary trees with $(n+1)$ leaves. Consider the space $\oplus_{n \geq 1} \mathbb{K}[Y_n]$ with the following two operations $\prec $ and $\succ$ defined recursively by
\begin{itemize}
\item $t \prec | = t $ and $| \prec t = 0$, for deg$(t) \geq 1$,
\item $t \succ | = 0$ and $| \succ t = t$, for  deg$(t) \geq 1$,
\item for $t = t_1 \vee t_2$ and $w = w_1 \vee w_2$ as grafting of trees,
\begin{align*}
t \prec w =~& t_1 \vee (t_2 \prec w + t_2 \succ w),\\
t \succ w =~& (t \prec w_1 + t \succ w_1) \vee w_2.
\end{align*}
For any vector space $V$, we define $\mathrm{Dend}(V) = \oplus_{n \geq 1} \mathbb{K}[Y_n] \otimes V^{ \otimes n }$ with the operations
\begin{align*}
(t; v_1 \otimes \cdots \otimes v_m) \prec (w; v_{m+1} \otimes \cdots \otimes v_{m+n}) = ( t \prec w; v_1 \otimes \cdots \otimes v_{m+n}),\\
(t; v_1 \otimes \cdots \otimes v_m) \succ (w; v_{m+1} \otimes \cdots \otimes v_{m+n}) = ( t \succ w; v_1 \otimes \cdots \otimes v_{m+n}).
\end{align*}
\end{itemize}
It is proved in \cite{loday} that $( \mathrm{Dend}(V), \prec, \succ )$ is a dendriform algebra. It is called the free dendriform algebra generated by $V$. 

Let $(G, \varepsilon)$ be an oriented group and there is an action of $G$ on $V$. Define an action of $G$ on $ \mathrm{Dend}(V)$ by
\begin{align*}
g (t ; v_1 \otimes \cdots \otimes v_m) = \begin{cases} (t; gv_1 \otimes \cdots \otimes gv_m)  \qquad \text{ if } \varepsilon(g) = 1 \\
(t^T ; gv_m \otimes \cdots \otimes gv_1 ) ~~~~ \quad \text{ if } \varepsilon(g) = - 1 \end{cases}
\end{align*}
where $t^T$ is the mirror reflection of $t$. With these notations, we have the following.

\begin{prop}
$\mathrm{Dend}(V)$ is an oriented dendriform algebra over $(G, \varepsilon).$
\end{prop}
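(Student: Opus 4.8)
The plan is to verify the two requirements in the definition of an oriented dendriform algebra: that the prescribed assignment $g\cdot(-)$ is genuinely a $G$-action on $\mathrm{Dend}(V)$, and that it satisfies the orientation-compatibility identities for $\prec$ and $\succ$. Since $(\mathrm{Dend}(V),\prec,\succ)$ is already a dendriform algebra by Loday's result in \cite{loday}, only these two points remain. Throughout I will use two elementary combinatorial facts about the mirror reflection $t\mapsto t^T$ on planar binary trees: it is an involution, $(t^T)^T=t$, with $|^T=|$; and it reverses grafting, $(t_1\vee t_2)^T=t_2^T\vee t_1^T$. All operations are extended $\mathbb{K}$-linearly to $\oplus_{n}\mathbb{K}[Y_n]\otimes V^{\otimes n}$.

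First I would check that $g\cdot(-)$ is an action. Because $\varepsilon$ is a homomorphism, the identity $e\in G$ has $\varepsilon(e)=+1$ and acts trivially. For $g(h\cdot x)=(gh)\cdot x$ I would split into the four cases according to the signs $\varepsilon(g),\varepsilon(h)\in\{\pm1\}$. The only case requiring involutivity of reflection is $\varepsilon(g)=\varepsilon(h)=-1$ (so $\varepsilon(gh)=+1$): here $h$ first reflects the tree and reverses-and-acts on the tensor factors, and $g$ does so again, so the two reflections cancel via $(t^T)^T=t$ and the two reversals restore the original order, leaving the factorwise action $(gh)v_i$. The mixed and all-positive cases are immediate.

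For the compatibility identities the case $\varepsilon(g)=+1$ is immediate, since then $g$ acts factorwise and leaves the tree unchanged, and both $\prec$ and $\succ$ are defined by grafting on trees together with concatenation on tensor factors, all of which commute with applying $g$ to each $v_i$. For $\varepsilon(g)=-1$ I would write $a=(t;v_1\otimes\cdots\otimes v_m)$ and $b=(w;v_{m+1}\otimes\cdots\otimes v_{m+n})$ and compute both sides of $g(a\prec b)=gb\succ ga$. The tensor parts agree automatically: both equal $gv_{m+n}\otimes\cdots\otimes gv_1$, since reversing the concatenated word $v_1\cdots v_{m+n}$ is the same as concatenating the reversed words of $b$ and $a$ in that order. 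Thus the whole identity reduces to the tree-level statement $(t\prec w)^T=w^T\succ t^T$, and similarly $g(a\succ b)=gb\prec ga$ reduces to $(t\succ w)^T=w^T\prec t^T$.

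The heart of the argument, and the main obstacle, is therefore to prove the two tree identities
\[
(t\prec w)^T=w^T\succ t^T,\qquad (t\succ w)^T=w^T\prec t^T,
\]
for all planar binary trees $t,w$. I would prove them simultaneously by induction on $\deg t+\deg w$, using the recursive definition of $\prec$ and $\succ$. The base cases, in which $t$ or $w$ equals $|$, follow at once from $t\prec|=t$, $|\prec t=0$, $t\succ|=0$, $|\succ t=t$ together with $|^T=|$. For the inductive step of the first identity, write $t=t_1\vee t_2$, expand $t\prec w=t_1\vee(t_2\star w)$, and apply the grafting-reversal rule together with the induction hypothesis (packaged as the single relation $(u\star w)^T=w^T\star u^T$) to obtain $(t\prec w)^T=(w^T\star t_2^T)\vee t_1^T$; the right-hand side $w^T\succ t^T=w^T\succ(t_2^T\vee t_1^T)$ expands by the definition of $\succ$ to exactly the same expression. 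The second identity is handled symmetrically by decomposing $w=w_1\vee w_2$. Once both tree identities are established, the orientation-compatibility of the action follows, completing the proof.
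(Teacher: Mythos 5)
Your proposal is correct and follows essentially the same route as the paper: the case $\varepsilon(g)=+1$ is immediate, and the case $\varepsilon(g)=-1$ reduces to the tree-level identities $(t\prec w)^T=w^T\succ t^T$ and $(t\succ w)^T=w^T\prec t^T$ via the grafting-reversal rule $(t_1\vee t_2)^T=t_2^T\vee t_1^T$. The only difference is one of presentation: you package the key step as an explicit simultaneous induction on $\deg t+\deg w$ (and separately verify the $G$-action axioms), whereas the paper unrolls the same computation once and silently invokes the identity $(t_2\star w)^T=w^T\star t_2^T$ on the smaller subtree $t_2$ — i.e.\ exactly your induction hypothesis.
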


\begin{proof}
For $\varepsilon (g) =1$, it is easy to see that the linear map on $\mathrm{Dend}(V)$ induced by $g$ is a dendriform algebra morphism. For $\varepsilon (g) = -1$, we have
\begin{align*}
g ~&((t; v_1 \otimes \cdots \otimes v_m) \prec (w; v_{m+1} \otimes \cdots \otimes v_{m+n}))  \\
=~& g ( t \prec w; v_1 \otimes \cdots \otimes v_{m+n}) \\
=~& ((t \prec w)^T ; gv_{m+n} \otimes \cdots \otimes gv_1) \\
=~& ((t_1 \vee (t_2 \prec w + t_2 \succ w))^T ; gv_{m+n} \otimes \cdots \otimes gv_1) \\
=~& ( (t_2 \prec w + t_2 \succ w)^T  \vee (t_1)^T ; gv_{m+n} \otimes \cdots \otimes g v_1) \\
=~& ((w^T \prec (t_2)^T + w^T \succ (t_2)^T) \vee (t_1)^T ; g v_{m+n} \otimes \cdots \otimes g v_1) \\
=~& ((w^T \prec (t^T)_1 + w^T \succ (t^T)_1) \vee (t^T)_2  ; g v_{m+n} \otimes \cdots \otimes g v_1) \qquad (\text{as } (t_1)^T = (t^T)_2 \text{ and } (t_2)^T = (t^T)_1)\\
=~& (w^T; gv_{m+n} \otimes \cdots \otimes gv_{m+1}) \succ (t^T; gv_m \otimes \cdots \otimes gv_{1})\\
=~& g (w; v_{m+1} \otimes \cdots \otimes v_{m+n}) \succ  g (t; v_1 \otimes \cdots \otimes v_m).
\end{align*}
Similarly, for $\varepsilon(g) = -1$, we can prove that
\begin{align*}
g ((t; v_1 \otimes \cdots \otimes v_m) \succ (w; v_{m+1} \otimes \cdots \otimes v_{m+n})) = g (w; v_{m+1} \otimes \cdots \otimes v_{m+n}) \prec  g (t; v_1 \otimes \cdots \otimes v_m).
\end{align*}
Hence the proof.
\end{proof}

\begin{defn}
A tridendriform algebra is a vector space $D$ together with three linear maps $\prec, \succ, \centerdot : D\otimes D \rightarrow D$ satisfying the following seven identities
\begin{align*}
& (a \prec b) \prec c =  a \prec (b \prec c + b \succ c + b \centerdot c), \qquad (a \succ b) \centerdot c =  a \succ (b \centerdot c), \\
& (a \succ b) \prec c = a \succ (b \prec c), ~~ \qquad \qquad \qquad \qquad  (a \prec b) \centerdot c = a \centerdot (b \succ c),  \\
& (a \prec b + a \succ b + a \centerdot b) \succ c = a \succ (b \succ c), \qquad (a \centerdot b) \prec c = a \centerdot (b \prec c),\\
& \qquad \qquad \qquad \qquad \qquad (a \centerdot b) \centerdot c =  a \centerdot (b \centerdot c),
\end{align*}
for all $ a, b, c \in D.$
\end{defn}

Let $(D, \prec, \succ, \centerdot)$ be a tridendriform algebra. Then $(D, \prec +~ \centerdot, \succ)$ is a dendriform algebra. Hence, the sum $* = \prec + \succ +~ \centerdot $ is an associative product on $D$.

\begin{defn}
An oriented tridendriform algebra over $(G , \varepsilon)$ is a tridendriform algebra $(D, \prec, \succ, \centerdot)$ together with an action $G \times D \rightarrow D$ satisfying
\begin{align*}
g ( a \prec b ) = \begin{cases} ga \prec gb \quad \varepsilon (g) =1 \\
gb \succ ga \quad \varepsilon (g) = - 1, \end{cases}  g ( a \succ b ) = \begin{cases} ga \succ gb \quad \varepsilon (g) =1 \\
gb \prec ga \quad \varepsilon (g) = - 1, \end{cases} g ( a \centerdot b ) = \begin{cases} ga \centerdot gb \quad \varepsilon (g) =1 \\
gb \centerdot ga \quad \varepsilon (g) = - 1. \end{cases}
\end{align*}
\end{defn}

\medskip



Recall that a linear map $R : A \rightarrow A$ on an (ordinary) associative algebra $A$ is said to be a Rota-Baxter operator on $A$ of weight $\lambda \in \mathbb{K}$ if $R$ satisfies
\begin{align*}
R(a) R(b) = R ( R(a) b + a R(b) + \lambda a b), ~~~ \text{ for all } a, b \in A.
\end{align*}
If $A$ is an oriented associative algebra over $(G, \varepsilon)$, then $R$ is a Rota-Baxter operator on oriented associative algebra $A$ if $R$ is further $G$-equivariant, i.e. $ R (ga) = g (R(a))$.


\begin{lemma}
Let $R$ be a Rota-Baxter operator of weight $\lambda$ on an oriented associative algebra $A$ over $(G, \varepsilon)$. Then
\begin{itemize}
\item[(i)] $- \lambda \mathrm{id} - R$ is a Rota-Baxter operator on $A$ of same weight $\lambda$.
\item[(ii)] For any scalar $\mu \in \mathbb{K}$, $\mu R$ is a Rota-Baxter operator on $A$ of weight $\mu \lambda$.
\end{itemize}
\end{lemma}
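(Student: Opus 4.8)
The plan is to verify both (i) and (ii) by direct substitution into the Rota-Baxter identity of weight $\lambda$, while separately checking that the $G$-equivariance hypothesis is preserved. Since an oriented Rota-Baxter operator is just an ordinary weight-$\lambda$ Rota-Baxter operator that is additionally $G$-equivariant, the claim naturally splits into two independent checks for each part: the algebraic Rota-Baxter identity, and the equivariance condition $R(ga) = g(R(a))$.

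For part (i), I would set $R' = -\lambda\,\mathrm{id} - R$ and expand $R'(a)R'(b)$ using bilinearity of the product, obtaining a sum of terms involving $R(a)R(b)$, $\lambda R(a) b$, $\lambda a R(b)$, and $\lambda^2 ab$. On the other side, I would expand $R'\big(R'(a)b + aR'(b) + \lambda ab\big)$, simplify the argument inside $R'$, and then apply the original Rota-Baxter identity $R(a)R(b) = R(R(a)b + aR(b) + \lambda ab)$ to replace the appearing $R(\cdots)R(\cdots)$-type expression. The two sides should then match after collecting terms; this is the standard fact that $-\lambda\,\mathrm{id} - R$ is Rota-Baxter of the same weight, so I would present the computation compactly. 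For equivariance, since $R$ satisfies $R(ga) = g R(a)$ and the $G$-action is linear, we get $R'(ga) = -\lambda\, ga - R(ga) = -\lambda\, ga - gR(a) = g(-\lambda a - R(a)) = g R'(a)$, using that $g$ acts linearly.

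For part (ii), the verification is shorter. Substituting $R'' = \mu R$ into the weight-$(\mu\lambda)$ identity, the left side is $R''(a)R''(b) = \mu^2 R(a)R(b)$, while the right side is $R''\big(R''(a)b + aR''(b) + \mu\lambda ab\big) = \mu R\big(\mu R(a)b + \mu a R(b) + \mu\lambda ab\big) = \mu^2 R\big(R(a)b + aR(b) + \lambda ab\big)$, and the original identity gives equality. The equivariance $R''(ga) = \mu R(ga) = \mu g R(a) = g(\mu R(a)) = g R''(a)$ again follows from linearity of the action.

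I do not expect any genuine obstacle here, as both statements are purely formal consequences of the defining identities; the only care needed is bookkeeping of the $\lambda$-terms in part (i). The main point worth emphasizing is simply that the $G$-equivariance passes through both constructions automatically because scalar multiplication and the additive structure commute with the linear $G$-action, so neither construction disturbs the orientation-compatibility.
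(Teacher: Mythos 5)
Your proposal is correct. The paper actually states this lemma without any proof, so there is nothing to compare against; your direct verification --- expanding $R'(a)R'(b)$ for $R'=-\lambda\,\mathrm{id}-R$ and for $R''=\mu R$, reducing to the original weight-$\lambda$ identity, and observing that $G$-equivariance is preserved because the action is linear and commutes with scalar multiplication and addition --- is precisely the routine argument the authors implicitly rely on, and the bookkeeping in part (i) (both sides equal $\lambda^2 ab+\lambda aR(b)+\lambda R(a)b+R(a)R(b)$) checks out.
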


Rota-Baxter operator on an associative algebra induces dendriform and tridendriform algebra structure \cite{aguiar,fa}. The same result holds in the oriented context.

\begin{prop}
Let $R$ be a Rota-Baxter operator on an oriented associative algebra $A$ over $(G, \varepsilon)$. Then the following structure maps
\begin{align*}
a \prec b = a R(b) \qquad a \succ b = R(a) b \qquad a \centerdot b = \lambda ab, ~~~ \text{ for } a, b \in A
\end{align*}
make $A$ into an oriented tridendriform algebra over $(G, \varepsilon)$. If $\lambda = 0$ then $a \prec b = a R(b)$ and $a \succ b = R(a) b$ make $A$ into an oriented dendriform algebra over $(G, \varepsilon).$
\end{prop}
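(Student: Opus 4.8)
The plan is to separate the claim into two independent verifications: first, that the three operations $\prec, \succ, \centerdot$ satisfy the seven defining identities of a tridendriform algebra, and second, that the given $G$-action is compatible with these operations in the sense required by the definition of an oriented tridendriform algebra.

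For the first verification, the seven identities involve only the associative product of $A$ together with the weight-$\lambda$ Rota-Baxter relation, and make no reference whatsoever to the $G$-action. This is exactly the classical statement that a Rota-Baxter operator of weight $\lambda$ on an associative algebra induces a tridendriform structure, established in \cite{aguiar, fa}. I would invoke that result directly rather than re-derive the identities, since nothing in the oriented setting changes them.

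The substance of the proof is therefore the orientation compatibility, which I would check for each of the three operations, treating $\varepsilon(g) = +1$ and $\varepsilon(g) = -1$ separately. When $\varepsilon(g) = +1$ the action is by algebra automorphisms and commutes with $R$, so each of the three required identities is immediate. The essential case is $\varepsilon(g) = -1$, where the action reverses the associative product, $g(xy) = (gy)(gx)$. Combining this orientation-reversal with the $G$-equivariance $gR(b) = R(gb)$ of the Rota-Baxter operator gives
\[
g(a \prec b) = g(a R(b)) = (gR(b))(ga) = R(gb)(ga) = gb \succ ga,
\]
and the parallel computations yield $g(a \succ b) = gb \prec ga$ and $g(a \centerdot b) = gb \centerdot ga$, which are precisely the defining orientation identities for $\varepsilon(g) = -1$.

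I do not anticipate a genuine obstacle here; the only point demanding care is tracking how orientation-reversal swaps $\prec$ with $\succ$ while fixing $\centerdot$, and applying the $G$-equivariance of $R$ at the correct step. Finally, the assertion for $\lambda = 0$ follows at once: setting $\lambda = 0$ removes the operation $\centerdot$ entirely, and the two orientation identities already established for $\prec$ and $\succ$ reduce exactly to the defining conditions of an oriented dendriform algebra over $(G, \varepsilon)$.
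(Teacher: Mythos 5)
Your proposal is correct and follows essentially the same route as the paper: the paper likewise invokes the classical result of Aguiar and Ebrahimi-Fard for the seven tridendriform identities and merely asserts that the oriented compatibility is easy to verify. Your explicit check of the $\varepsilon(g)=-1$ case, using $g(xy)=(gy)(gx)$ together with the $G$-equivariance of $R$, is exactly the verification the paper leaves to the reader.
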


\begin{proof}
It is known that the above structure maps make $A$ into a tridendriform algebra \cite{aguiar}. It is also easy to verify that the above structure maps satisfy the oriented condition.
\end{proof}

Rota-Baxter operator on associative algebras arises from solutions of the associative classical Yang-Baxter equation (CYBE) \cite{aguiar2}. An element $r = \sum r_{(1)} \otimes r_{(2)} \in A \otimes A$ is a solution of the associative CYBE on an associative algebra $A$ if $r$ satisfies
\begin{align*}
r_{13} r_{12} - r_{12} r_{23} + r_{23} r_{13} = 0,
\end{align*}
where $r_{12} =  \sum r_{(1)} \otimes r_{(2)} \otimes 1$, $r_{13} = \sum r_{(1)} \otimes 1 \otimes r_{(2)}$ and $r_{23} = \sum 1 \otimes r_{(1)} \otimes r_{(2)} \in A \otimes A \otimes A$.
In such a case, the map $R : A \rightarrow A$ defined by $R(a) = \sum r_{(1)} a r_{(2)}$ is a Rota-Baxter operator on $A$ of weight $0$. In particular, if $A$ is an oriented associative algebra over $(G, \varepsilon)$, the solution $r$ is symmetric (i.e. $\sum r_{(1)} \otimes r_{(2)} = \sum r_{(2)} \otimes r_{(1)}$) and $G$-invariant (i.e. $r = (g \otimes g ) r$, for all $g \in G$), then the Rota-Baxter operator $R$ is $G$-equivariant. 
\medskip

Dendriform algebras are also related to pre-Lie structures \cite{aguiar,aguiar2}. It also holds in the oriented case.

\begin{defn}
A pre-Lie algebra consists of a vector space $L$ together with a linear map $\diamond : L \otimes L \rightarrow L$ satisfying
\begin{align*}
( a \diamond b ) \diamond c  -  a \diamond (b  \diamond c ) = ( b \diamond a ) \diamond c  -  b \diamond (a  \diamond c ), ~~~~ \text{ for } a, b, c \in L.
\end{align*}
\end{defn}

An oriented pre-Lie algebra over $(G, \varepsilon)$ is a pre-Lie algebra $(L, \diamond)$ together with an action $G \times L \rightarrow L, ~(g, x) \mapsto gx$ satisfying
\begin{align*}
g ( a \diamond b) =  ga \diamond gb \quad \text{ if } \varepsilon(g) = 1  ~~~~  \text{ and } ~~~~
g ( a \diamond b) = gb \diamond ga  \quad \text{ if } \varepsilon(g) =- 1.
\end{align*}

\begin{prop}
Let $(D, \prec, \succ )$ be an oriented dendriform algebra over $(G, \varepsilon)$. Then the operations
\begin{itemize}
\item[(i)] $a \diamond b =  a \succ b - b \prec a$ makes $D$ into an oriented pre-Lie algebra over $(G, \varepsilon)$;
\item[(ii)] $[a, b] = a \diamond b - b \diamond a$ makes $D$ into an oriented Lie algebra over $(G, \varepsilon)$.
\end{itemize}
\end{prop}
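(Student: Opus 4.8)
The plan is to factor the statement into its classical algebraic content and its orientation-theoretic content, and to treat (i) and (ii) together wherever possible. The fact that $a \diamond b = a \succ b - b \prec a$ defines a (left) pre-Lie product on the underlying dendriform algebra, and that its commutator $[a,b] = a \diamond b - b \diamond a$ is a Lie bracket, is classical and may be imported from \cite{aguiar, aguiar2}; alternatively it can be extracted directly from the defining identities (\ref{dend-iden-1})--(\ref{dend-iden-3}). Hence the only genuinely new point is that the given $G$-action is compatible with $\diamond$ and with $[\,,\,]$ in the oriented sense, and this is what I would concentrate on.

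For both (i) and (ii) the orientation-preserving case $\varepsilon(g) = +1$ is immediate: the hypotheses say that such a $g$ commutes with both $\prec$ and $\succ$, hence acts as an automorphism of the dendriform algebra, so it automatically commutes with every operation assembled from $\prec$ and $\succ$. This yields $g(a \diamond b) = ga \diamond gb$ and $g[a,b] = [ga, gb]$ with no further work, and the entire content is therefore concentrated in the orientation-reversing case $\varepsilon(g) = -1$.

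I would dispatch (ii) by reducing it to the associative theory already in place. A one-line expansion gives $[a,b] = a \diamond b - b \diamond a = (a \prec b + a \succ b) - (b \prec a + b \succ a) = a \star b - b \star a$, so the bracket is precisely the commutator of the associated product $\star = \prec + \succ$. Moreover $(D, \star)$ is an oriented associative algebra over $(G, \varepsilon)$: for $\varepsilon(g) = -1$ one computes $g(a \star b) = g(a \prec b) + g(a \succ b) = gb \succ ga + gb \prec ga = gb \star ga$. The oriented Lie structure on $(D, [\,,\,])$ then follows at once by applying the commutator functor $(~)_c : \mathrm{\bf OAss} \rightarrow \mathrm{\bf OLie}$ constructed earlier, so no separate orientation computation for the bracket is required.

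The main obstacle is the orientation-reversing check for the pre-Lie product in (i). Here I would expand $g(a \diamond b) = g(a \succ b) - g(b \prec a)$, substitute the two oriented dendriform identities for $\varepsilon(g) = -1$, namely $g(a \succ b) = gb \prec ga$ and $g(b \prec a) = ga \succ gb$, and then match the outcome against the target $gb \diamond ga = gb \succ ga - ga \prec gb$. This is the delicate step: it is the unique place in the argument where both orientation identities (the one for $\prec$ and the one for $\succ$) are needed simultaneously and where one must track carefully how each term is converted under the action, so I would treat this computation as the crux and carry it out term by term.
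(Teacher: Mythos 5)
Your reduction of (ii) to the commutator of $\star = \prec + \succ$ and the functor $(~)_c$ is correct and clean, and the $\varepsilon(g)=+1$ cases of both parts are indeed immediate, for the reason you give. The problem is the step you yourself flag as the crux and then do not actually carry out: the orientation-reversing check for $\diamond$ in (i). Performing the substitution you describe gives, for $\varepsilon(g)=-1$,
\[
g(a \diamond b) \;=\; g(a \succ b) - g(b \prec a) \;=\; gb \prec ga \;-\; ga \succ gb,
\]
whereas the target is
\[
gb \diamond ga \;=\; gb \succ ga \;-\; ga \prec gb .
\]
These do not match: their equality for all $a,b$ would force the identity $x \prec y + y \prec x = x \succ y + y \succ x$ on $D$, which is not a consequence of the dendriform axioms (it already fails for the dendriform structure $a \prec b = aR(b)$, $a \succ b = R(a)b$ induced by a Rota--Baxter operator on a noncommutative algebra). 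What the computation actually yields is $g(a\diamond b) = -\,(ga \diamond gb)$, and since $\diamond$ is not antisymmetric this is genuinely different from $gb \diamond ga$. So with the stated definition of an oriented pre-Lie algebra the verification cannot be completed; part (i) would only go through if that definition were replaced by the sign-twisted condition $g(a\diamond b) = -(ga\diamond gb)$ for orientation-reversing $g$. Part (ii) survives precisely because the bracket is antisymmetric, so $-[ga,gb]=[gb,ga]$. In short, the gap is that you asserted the decisive computation would close without doing it, and it does not close as stated; note also that the paper gives no proof of this proposition, so there is nothing there to fall back on.
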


\section{Cohomology of oriented dendriform algebras}\label{section-cohomo-ori-dend}

In this section, we introduce cohomology for oriented dendriform algebras. This cohomology is obtained from a bicomplex that mixes the cochain complex computing group cohomology and the cochain complex computing dendriform algebra cohomology.

Let $(G, \varepsilon)$ be a fixed oriented group and $D$ be an oriented dendriform algebra over $(G, \varepsilon)$. An oriented representation of $D$ consists of an ordinary representation $M$ together with a $G$-module structure on $M$ satisfying
\begin{align*}
g ( a \prec m ) = \begin{cases} ga \prec gm \quad \varepsilon(g) =+1 \\ gm \succ ga \quad \varepsilon(g) =-1, \end{cases} \qquad g ( a \succ m ) = \begin{cases} ga \succ gm \quad \varepsilon(g) =+1 \\ gm \prec ga \quad \varepsilon(g) =-1, \end{cases}  \\
g ( m \prec a ) = \begin{cases} gm \prec ga \quad \varepsilon(g) =+1 \\ ga \succ gm \quad \varepsilon(g) =-1, \end{cases} \qquad g ( m \succ a ) = \begin{cases} gm \succ ga \quad \varepsilon(g) =+1 \\ ga \prec gm \quad \varepsilon(g) =-1. \end{cases}
\end{align*}

Let $D$ be an oriented dendriform algebra over $(G, \varepsilon)$ and $M$ an oriented representation of $D$. For each $n \geq 1$, the group $G$ acts on the $n$-th cochain group $C^n_{\mathrm{dend}} (D, M) = \mathrm{Hom}( \mathbb{K}[C_n] \otimes D^{\otimes n}, M)$ by
\begin{align}\label{cochain-action}
(gf)([r]; a_1, \ldots, a_n) = \begin{cases} g f ( [r]; g^{-1}a_1, \ldots, g^{-1}a_n) ~~~\hspace{.3cm} \qquad \qquad \qquad  &\text{ if } \varepsilon(g) = + 1 \\ (-1)^{\frac{(n-1)(n-2)}{2}}~ gf([n-r+1]; g^{-1} a_n, \ldots, g^{-1} a_1 ) \quad &\text{ if } \varepsilon(g) =-1, \end{cases}
\end{align}
for $[r] \in C_n$ and $a_1, \ldots, a_n \in D$. With this notation, we have the following.

\begin{prop}
The dendriform algebra cochain complex $( \{ C^n_{\mathrm{dend}} (D, M) \}_{n \geq 1}, \delta_{\mathrm{dend}})$ is a $G$-equivariant cochain complex.
\end{prop}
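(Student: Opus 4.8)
The plan is to verify two things: first, that the $G$-action \eqref{cochain-action} is a genuine left action on each cochain group $C^n_{\mathrm{dend}}(D,M)$, and second, that the coboundary $\delta_{\mathrm{dend}}$ is $G$-equivariant, i.e.\ $\delta_{\mathrm{dend}}(gf) = g(\delta_{\mathrm{dend}} f)$ for all $g \in G$. Together these say precisely that $(\{C^n_{\mathrm{dend}}(D,M)\}_{n\geq 1}, \delta_{\mathrm{dend}})$ is a $G$-equivariant cochain complex in the sense of Remark \ref{rem-grp-coho}.

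First I would check that \eqref{cochain-action} defines an action. On the subgroup $\ker(\varepsilon)$ (where $\varepsilon(g)=+1$) this is the standard conjugation action, which is routine. The subtlety is in composing two orientation-reversing elements $g,h$ (with $\varepsilon(g)=\varepsilon(h)=-1$, so $\varepsilon(gh)=+1$): applying the $\varepsilon=-1$ formula twice reverses the order of the arguments twice (restoring the original order), reverses $[r]\mapsto[n-r+1]\mapsto[r]$ (restoring $[r]$), and contributes the sign $(-1)^{2\cdot\frac{(n-1)(n-2)}{2}}=+1$, so $(gh)f = g(hf)$ reduces correctly to the $\varepsilon=+1$ formula. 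The mixed cases $\varepsilon(g)=+1,\varepsilon(h)=-1$ and vice versa are similar single-reversal checks. This is bookkeeping with the sign $(-1)^{\frac{(n-1)(n-2)}{2}}$ and the index reflection $[r]\mapsto[n-r+1]$, and should go through cleanly.

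The main work is the equivariance of $\delta_{\mathrm{dend}}$, and here the essential point is that it is formally the same computation already carried out in the proof of Proposition \ref{inv-sub-comp}. Indeed, the defining condition of $iC^n_{\mathrm{dend}}(D,M)$ is exactly invariance under the single involution $*$, which is the $\varepsilon=-1$ part of the present action with $g^{-1}a = a^*$; and the orientation conditions on an oriented representation specialize to the representation-compatibility of $*$ used there. So for orientation-reversing $g$ I would transport each of the three terms of $\delta_{\mathrm{dend}}(gf)$ through the action, using the oriented-representation identities to move $g$ past $\theta_1,\theta_2$ (which swaps $\theta_1 \leftrightarrow \theta_2$) and the oriented dendriform identities to move $g$ past $\pi_D$, together with the combinatorial reflection identities for $R_0,R_i$ under $[r]\mapsto[n-r+1]$ (the substitutions $R_1(2;n,1)\leftrightarrow R_2(2;1,n)$ and $R_i \leftrightarrow R_{n-i+1}$) that are implicit in the Proposition \ref{inv-sub-comp} computation. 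The sign accounting is identical to that proof. For orientation-preserving $g$ the computation is the standard, sign-free verification that conjugation commutes with $\delta_{\mathrm{dend}}$.

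The hard part will be the orientation-reversing case of the equivariance, specifically matching the reflected combinatorial arguments of the $R$-functions and tracking the Koszul-type sign $(-1)^{\frac{(n-1)(n-2)}{2}}$ as $n$ increases by one under $\delta_{\mathrm{dend}}$; but since this is exactly the sign manipulation validated in Proposition \ref{inv-sub-comp} (which shows $(-1)^{\frac{(n-1)(n-2)}{2}+(n+1)} = (-1)^{\frac{n(n-1)}{2}}$), I expect no genuinely new difficulty. I would therefore present the $\varepsilon=+1$ case briefly and, for the $\varepsilon=-1$ case, indicate that the term-by-term transformation is identical to the proof of Proposition \ref{inv-sub-comp} with $g^{-1}(\cdot)$ in place of $(\cdot)^*$, concluding $\delta_{\mathrm{dend}}(gf)=g(\delta_{\mathrm{dend}}f)$.
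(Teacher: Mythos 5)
Your proposal is correct and follows essentially the same route as the paper: the paper's proof is exactly the explicit term-by-term verification of $\delta_{\mathrm{dend}}(gf)=g(\delta_{\mathrm{dend}}f)$ in the two cases $\varepsilon(g)=\pm 1$, with the $\varepsilon(g)=-1$ case being the same sign and index-reflection manipulation as in Proposition \ref{inv-sub-comp} (the paper writes it out in full rather than citing that proposition, and omits the check that \eqref{cochain-action} is a genuine action, which you rightly include).
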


\begin{proof}
We have to prove that the coboundary map $\delta_{\mathrm{dend}}$ is $G$-equivariant map with respect to the above $G$-action on cochain groups. Let $f \in C^n_{\mathrm{dend}}(D,M)$ and $g \in G$ with $\varepsilon (g) = + 1$. Then for any $[r] \in C_{n+1}$ and $a_1, \ldots, a_{n+1} \in D$, we have
\begin{align*}
& \delta_{\mathrm{dend}} (gf) ([r]; a_1 , \ldots, a_{n+1}) \\
&=  \theta_1 \big( R_0 (2;1,n) [r]; ~a_1, (gf) (R_2 (2;1,n)[r]; a_2, \ldots, a_{n+1})   \big) \\
&+ \sum_{i=1}^n  (-1)^i~ (gf) \big(  R_0 (n; 1, \ldots, 2, \ldots, 1)[r]; a_1, \ldots, a_{i-1}, \pi_D (R_i (n;1, \ldots, 2, \ldots, 1)[r]; a_i, a_{i+1}), \ldots, a_{n+1}   \big) \\
&+ (-1)^{n+1} ~\theta_2 \big( R_0 (2; n, 1) [r];~ (gf) (R_1 (2;n,1)[r]; a_1, \ldots, a_n), a_{n+1}   \big) \\
& =\theta_1 \big( R_0 (2;1,n) [r]; ~a_1, g (f (R_2 (2;1,n)[r]; g^{-1}a_2, \ldots, g^{-1}a_{n+1})) \big) \\
&+ \sum_{i=1}^n  (-1)^i~ g(f \big(  R_0 (n; 1, \ldots, 2, \ldots, 1)[r]; g^{-1}a_1, \ldots, g^{-1}a_{i-1},\\& \qquad \qquad \qquad \qquad \qquad \pi_D (R_i (n;1, \ldots, 2, \ldots, 1)[r]; g^{-1}a_i, g^{-1}a_{i+1}), \ldots, g^{-1}a_{n+1}   )\big) \\
&+ (-1)^{n+1} ~\theta_2 \big( R_0 (2; n, 1) [r];~ g(f (R_1 (2;n,1)[r]; g^{-1}a_1, \ldots, g^{-1}a_n)), a_{n+1}  \big).
\end{align*}
On the other hand
\begin{align*}
&(g(\delta_{\mathrm{dend}} f)) ([r]; a_1 , \ldots, a_{n+1}) \\
&= g \big( (\delta_{\mathrm{dend}} f) ([r]; g^{-1}a_1 , \ldots, g^{-1}a_{n+1}) \big)    \\
&=  g \big( \theta_1 ( R_0 (2;1,n) [r]; ~g^{-1}a_1, f (R_2 (2;1,n)[r]; g^{-1}a_2, \ldots, g^{-1}a_{n+1})  ) \big)  \\
&+ \sum_{i=1}^n  (-1)^i~ g \big( f (  R_0 (n; 1, \ldots, 2, \ldots, 1)[r]; g^{-1}a_1, \ldots, g^{-1}a_{i-1}, \\
& \qquad \qquad \qquad \qquad \qquad \pi_D (R_i (n;1, \ldots, 2, \ldots, 1)[r]; g^{-1}a_i, g^{-1}a_{i+1}), \ldots, g^{-1}a_{n+1} )  \big) \\
&+ (-1)^{n+1} ~g \big( \theta_2 ( R_0 (2; n, 1) [r];~ f (R_1 (2;n,1)[r]; g^{-1}a_1, \ldots, g^{-1}a_n), g^{-1}a_{n+1}   )  \big) \\
&=  \theta_1 \big( R_0 (2;1,n) [r]; ~ a_1, g(f (R_2 (2;1,n)[r]; g^{-1}a_2, \ldots, g^{-1}a_{n+1}))   \big) \\
&+ \sum_{i=1}^n  (-1)^i~ g \big( f (  R_0 (n; 1, \ldots, 2, \ldots, 1)[r]; g^{-1}a_1, \ldots, g^{-1}a_{i-1}, \\
& \qquad \qquad \qquad \qquad \qquad \pi_D (R_i (n;1, \ldots, 2, \ldots, 1)[r]; g^{-1}a_i, g^{-1}a_{i+1}), \ldots, g^{-1}a_{n+1} )  \big) \\
&+ (-1)^{n+1} ~\theta_2 \big( R_0 (2; n, 1) [r];~ g(f (R_1 (2;n,1)[r]; g^{-1}a_1, \ldots, g^{-1}a_n)), a_{n+1}   \big).
\end{align*}
Thus, for $\varepsilon(g) = + 1$, we get $\delta_{\mathrm{dend}}(gf) = g (\delta_{\mathrm{dend}}(f))$. Next we verify the same when $\varepsilon(g) = - 1$. In this case, we observe that
\begin{align*}
&\delta_{\mathrm{dend}} (gf) ([r]; a_1 , \ldots, a_{n+1}) \\
&=  \theta_1 \big( R_0 (2;1,n) [r]; ~a_1, (gf) (R_2 (2;1,n)[r]; a_2, \ldots, a_{n+1})   \big) \\
&+ \sum_{i=1}^n  (-1)^i~ (gf) \big(  R_0 (n; 1, \ldots, 2, \ldots, 1)[r]; a_1, \ldots, a_{i-1},
\pi_D (R_i (n;1, \ldots, 2, \ldots, 1)[r]; a_i, a_{i+1}), \ldots, a_{n+1}   \big) \\
&+ (-1)^{n+1} ~\theta_2 \big( R_0 (2; n, 1) [r];~ (gf) (R_1 (2;n,1)[r]; a_1, \ldots, a_n), a_{n+1}   \big) \\
& =(-1)^{\frac{(n-1)(n-2)}{2}}~ \theta_1 \big( R_0 (2;1,n) [r]; ~a_1, g (f (R_2 (2;n,1)[n-r+2]; g^{-1}a_{n+1}, \ldots, g^{-1}a_{2}) \big) \\
&+ (-1)^{\frac{(n-1)(n-2)}{2}} (-1)^{n+1} \sum_{i=1}^{n} (-1)^{n-i+1} ~ g(f \big(  R_0 (n; 1, \ldots, \underbrace{2}_{n-i+1}, \ldots, 1)[n-r+2]; g^{-1}a_{n+1}, \ldots, g^{-1}a_{i+2}, \\
& \qquad \qquad \qquad \pi_D (R_{n-i+1} (n; 1, \ldots, \underbrace{2}_{n-i+1}, \ldots, 1)[n-r+2]; g^{-1}a_{i+1}, g^{-1}a_{i}), g^{-1}a_{i-1}, \ldots, g^{-1}a_{1})\big) \\
&+ (-1)^{\frac{(n-1)(n-2)}{2} + (n+1)} ~\theta_2 \big( R_0 (2; n, 1) [r];~ g(f (R_2 (2;1,n)[n-r+2]; g^{-1}a_n, \ldots, g^{-1}a_1)), a_{n+1} \big).
\end{align*}
We also have
\begin{align*}
&(g(\delta_{\mathrm{dend}} f)) ([r]; a_1 , \ldots, a_{n+1}) \\
&= (-1)^{\frac{n(n-1)}{2}} g ((\delta_{\mathrm{dend}} f) ([n-r+2]; g^{-1}a_{n+1} , \ldots, g^{-1}a_{1}))   \\
&= (-1)^{\frac{n(n-1)}{2}} g \big( \theta_1 ( R_0 (2;1,n) [n-r+2]; ~g^{-1}a_{n+1}, f (R_2 (2;1,n)[n-r+2]; g^{-1}a_2, \ldots, g^{-1}a_{1})  ) \big) \\
& + (-1)^{\frac{n(n-1)}{2}}\sum_{i=1}^n (-1)^{n-i+1} ~ g \big( f (  R_0 (n; 1, \ldots, \underbrace{2}_{n-i+1}, \ldots, 1)[n-r+2]; g^{-1}a_{n+1}, \ldots, g^{-1}a_{i+2}, \\
& \qquad \qquad \qquad \pi_D (R_{n-i+1} (n;1, \ldots, \underbrace{2}_{n-i+1}, \ldots, 1)[n-r+2]; g^{-1}a_{i+1}, g^{-1}a_{i}), g^{-1}a_{i-1},\ldots, g^{-1}a_{1}  ) \big) \\
&+ (-1)^{\frac{n(n-1)}{2} + (n+1)} ~g \big( \theta_2 ( R_0 (2; n, 1) [n-r+2];~ f (R_1 (2;n,1)[n-r+2]; g^{-1}a_{n+1}, \ldots, g^{-1}a_2), g^{-1}a_{1}  ) \big) \\
&=  (-1)^{\frac{n(n-1)}{2}}~ \theta_2 \big( R_0 (2;n,1) [r]; ~ g(f (R_1 (2;1,n)[n-r+2]; g^{-1}a_n, \ldots, g^{-1}a_{1})), a_{n+1} \big) \\
& + (-1)^{\frac{n(n-1)}{2}}\sum_{i=1}^n (-1)^{n-i+1} ~ g \big( f (  R_0 (n; 1, \ldots, \underbrace{2}_{n-i+1}, \ldots, 1)[n-r+2]; g^{-1}a_{n+1}, \ldots, g^{-1}a_{i+2}, \\
& \qquad \qquad \qquad \pi_D (R_{n-i+1} (n;1, \ldots, \underbrace{2}_{n-i+1}, \ldots, 1)[n-r+2]; g^{-1}a_{i+1}, g^{-1}a_{i}), g^{-1}a_{i-1},\ldots, g^{-1}a_{1}  ) \big) \\
&+ (-1)^{\frac{n(n-1)}{2}} (-1)^{n+1} ~ \theta_1 \big( R_0 (2;1, n) [r];~a_{1}, g(f (R_2 (2;n,1)[n-r+2]; g^{-1}a_{n+1}, \ldots, g^{-1}a_2))  \big).
\end{align*}
Thus, for $\varepsilon(g) = -1$ we also get $\delta_{\mathrm{dend}}(gf) = g(\delta_{\mathrm{dend}}(f))$. Hence the proof.
\end{proof}

It follows from the above proposition and Remark \ref{rem-grp-coho} that we obtain a bicomplex 
\begin{align}\label{bicom}
\xymatrix{
\vdots & \vdots  &  \\
 \mathrm{Maps} (G^{\times 2}, \mathrm{Hom}(\mathbb{K}[C_1] \otimes D, M)) \ar[r]^{\partial_h} \ar[u]^{\partial_v} & \mathrm{Maps} (G^{\times 2}, \mathrm{Hom}(\mathbb{K}[C_2] \otimes D^{\otimes 2}, M)) \ar[r]^{}  \ar[u]^{\partial_v} & ~ \cdots\\
 \mathrm{Maps} (G , \mathrm{Hom}(\mathbb{K}[C_1] \otimes D, M) ) \ar[u]^{\partial_v} \ar[r]^{\partial_h} & \mathrm{Maps} (G, \mathrm{Hom}(\mathbb{K}[C_2] \otimes D^{\otimes 2}, M)) \ar[u]^{\partial_v} \ar[r]^{} & ~ \cdots\\
   \mathrm{Hom} (\mathbb{K}[C_1] \otimes D, M) \ar[r]^{\partial_h} \ar[u]^{\partial_v} &  \mathrm{Hom}(\mathbb{K}[C_2] \otimes D^{\otimes 2}, M)  \ar[u]^{\partial_v} \ar[r]^{\partial_h} & \cdots .
}
\end{align}
The coboundary maps are explicitly given as follows.

\medskip

\noindent (A) The horizontal coboundary maps $\partial_h$ are given by
\begin{align*}
&(\partial_{h} f) (g_1,\ldots, g_n;[r]; a_1 , \ldots, a_{n+1}) \\
&=  \theta_1 \big( R_0 (2;1,n) [r]; ~a_1, f (g_1,\dots,g_n;R_2 (2;1,n)[r]; a_2, \ldots, a_{n+1})   \big) \\
&+ \sum_{i=1}^n  (-1)^i~ f \big(g_1,\ldots,g_n;  R_0 (n; 1, \ldots, 2, \ldots, 1)[r]; a_1, \ldots, a_{i-1}, \pi_D (R_i (1, \ldots, 2, \ldots, 1)[r]; a_i, a_{i+1}), \ldots, a_{n+1}   \big) \\
&+ (-1)^{n+1} ~\theta_2 \big( R_0 (2; n, 1) [r];~ f (g_1,\ldots,g_n;R_1 (2;n,1)[r]; a_1, \ldots, a_n), a_{n+1}   \big), ~~~ \text{ for } [r] \in C_{n+1}.
\end{align*}

\medskip

\noindent (B) The coboundary of the first vertical maps $\partial_v$ is given by
\begin{align*}
&(\partial_{v} \beta) (g_1,\ldots, g_{n+1};[1];a) \\
&=  g_1 \beta(g_2,\ldots, g_{n+1};[1];g_1^{-1}a)
+ \sum_{i=1}^n  (-1)^i~ \beta \big(g_1,\ldots,g_i g_{i+1},\ldots,g_{n+1};[1];a) 
+ (-1)^{n+1} ~ \beta (g_1,\ldots,g_n;[1];a).
\end{align*}

\medskip

\noindent (C) The coboundary of the second vertical maps $\partial_v$ is given by
\begin{align*}
&(\partial_{v} \gamma) (g_1,\ldots, g_{n+1};[r];a,b) \\
&= \begin{cases}  g_1 \gamma(g_2,\ldots, g_{n+1};[r];g_1^{-1}a, g_1^{-1}b) + \sum_{i=1}^n  (-1)^i~ \gamma \big(g_1,\ldots,g_i g_{i+1},\ldots,g_{n+1};[r];a,b) \\ \qquad \qquad \qquad + (-1)^{n+1} ~ \gamma (g_1,\ldots,g_n;[r];a,b) \quad \text{if } \varepsilon(g_1) = +1 \\\\
g_1 \gamma(g_2,\ldots, g_{n+1};[3-r];g_1^{-1}b, g_1^{-1}a) + \sum_{i=1}^n  (-1)^i~ \gamma \big(g_1,\ldots,g_i g_{i+1},\ldots,g_{n+1};[r];a,b) \\ \qquad \qquad \qquad + (-1)^{n+1} ~ \gamma (g_1,\ldots,g_n;[r];a,b) \quad \text{if } \varepsilon(g_1) = -1 , \end{cases}
\end{align*}
for $[r] \in C_2$.


We define 
\begin{align*}
C^n_{G, \mathrm{dend}} (D, M) = \bigoplus_{i+j=n, i \geq 0, j \geq 1} \mathrm{Maps} (G^{\times i}, \mathrm{Hom}(\mathbb{K}[C_j] \otimes D^{\otimes j}, M)), ~ \text{ for } n \geq 1
\end{align*}
with the convension that  $\mathrm{Maps} (G^{0}, \mathrm{Hom}(\mathbb{K}[C_j] \otimes D^{\otimes j}, M)) = \mathrm{Hom}(\mathbb{K}[C_j] \otimes D^{\otimes j}, M)$. The coboundary map $\partial : C^n_{G, \mathrm{dend}} (D, M) \rightarrow C^{n+1}_{G, \mathrm{dend}} (D, M)$ is induced from the bicomplex (\ref{bicom}), i.e.
\begin{align*}
\partial ( \nu ) =  \partial_v (c) + (-1)^i \partial_h (c), ~ \text{ for } \nu \in \mathrm{Maps} (G^{\times i}, \mathrm{Hom}(\mathbb{K}[C_j] \otimes D^{\otimes j}, M)).
\end{align*}
We denote the space of $n$-cocycles by $Z^n_{G, \mathrm{dend}} (D,M)$ and the space of $n$-coboundaries by $B^n_{G, \mathrm{dend}}(D,M)$. Then $B^n_{G, \mathrm{dend}}(D,M) \subset Z^n_{G, \mathrm{dend}} (D,M) \subset C^n_{G, \mathrm{dend}} (D,M)$. The corresponding cohomology groups are denoted by ${H}^n_{G, \mathrm{dend}} (D, M)$, for $n \geq 1$ and called the cohomology of the oriented dendriform algebra. 

\medskip

\begin{remark}
Let us explain the second cohomology group ${H}^2_{G, \mathrm{dend}} (D, M)$ in details.
It follows from (A), (B) and (C) above that a pair 
\begin{align*}(\alpha,\beta) \in  \text{Maps}(G, \text{Hom}(\mathbb{K}[C_1]\otimes D,M)) \oplus \text{Hom}(\mathbb{K}[C_2]\otimes D^{\otimes 2},M)
\end{align*}
is a $2$-cocycle, that is, lies in $Z^2_{G, \mathrm{dend}} (D, M)$ if they satisfy 
\begin{align}\label{coho 1}
\alpha(gh ; a) = {g \alpha(h; {{g^{-1}}{a}})} + \alpha(g; a),
\end{align}
\begin{align}\label{2-cocycle exp}
\theta_1 \big( [r]; a, \alpha (g ; b)   \big)  -&  \alpha \big(g ; \pi_D ([r]; a, b) \big)  + \theta_2 \big( [r]; \alpha (g ;  a), b   \big) \nonumber \\
&= \begin{cases}
\,g(\beta([r];{g^{-1}}a,{{g^{-1}}{}b)}) - \beta([r];a, b)   &{\rm if}\ \varepsilon(g) = +1\\
\,g (\beta([3-r];{g^{-1}}{}b,{g^{-1}}{}a)) - \beta([r];a, b)  &{\rm if} \  \varepsilon(g) = -1, \end{cases}
\end{align}
and
\begin{align}\label{2-2co}
\partial_h (\beta ) := \delta_{\mathrm{dend}} (\beta) = 0,
\end{align}
for all $g, h \in G;~ a,b \in D$ and $[r]\in C_2$. Here we have used the notation $\alpha (g;a)$ in the place of $\alpha (g;[1]; a)$. 

\medskip

Moreover, the pair $(\alpha, \beta)$ is a $2$-coboundary, that is, lies in ${B}^2_{G, \mathrm{dend}}(D,M)$ if and only if  there exists $\gamma \in C^1_G (D, M) = \text{Hom}(\mathbb{K}[C_1]\otimes D, M) \simeq \text{Hom} (D,M)$ such that 
 \begin{align}\label{when-cob}
 \begin{cases} \alpha(g;a) =~ g (\gamma (g^{-1}a))- \gamma(a),\\
\beta ([r]; a, b) =~ \theta_0 ([r]; a, \gamma(b)) - \gamma(\pi_D([r]; a, b)) + \theta_2([r]; \gamma(a), b), ~ \text{ for } [r] \in C_2. \end{cases}
 \end{align}
 \end{remark}

 

Let us use the following notations:
\begin{align*}
\beta^l(a, b) = \beta([1]; a, b) ~\text{ and }~ \beta^r(a, b) = \beta([2]; a, b).
\end{align*}
Using these notations we can rewrite Equation (\ref{2-cocycle exp}) as follows:
\begin{align}\label{2-cocycle exp left}
a \prec \alpha (g ; b) - \alpha(g ; a \prec b)+ \alpha (g ;  a) \prec b 
= \begin{cases} 
\,g (\beta^l (g^{-1}a,g^{-1}b)) - \beta^l(a, b)  &  {\rm if}\ \varepsilon(g) = +1\\
\,g (\beta^r (g^{-1}b,g^{-1}a)) - \beta^l(a, b)   & {\rm if} \  \varepsilon(g) = -1, \end{cases}
\end{align}
and
\begin{align}\label{2-cocycle exp right}
a \succ \alpha (g; b) - \alpha(g; a \succ b)+ \alpha (g;  a) \succ b 
= \begin{cases}
\,g (\beta^r (g^{-1}a,g^{-1}b)) - \beta^r (a, b)  & {\rm if}\ \varepsilon(g) = +1\\
\,g (\beta^l (g^{-1}b,g^{-1}a)) - \beta^r (a, b)  & {\rm if} \  \varepsilon(g) = -1.\end{cases}
\end{align}

In the following, we show that the second cohomology group is closely related to extensions and deformations of oriented dendriform algebras.

\subsection{Relation with the cohomology of oriented associative algebras}
In Subsection \ref{subsec-inv-cohomo} we show that there is a morphism from the cohomology of an involutive dendriform algebra to the Hochschild cohomology of the corresponding involutive associative algebra. In this subsection, we prove a similar result in the oriented context.

We first recall the cohomology of an oriented associative algebra as defined in \cite{koam-pira}. Let $A$ be an oriented associative algebra over $(G, \varepsilon)$ and $M$ be an oriented $A$-bimodule. Consider the standard Hochschild cochain complex of $A$ with coefficients in $M$,
\begin{align*}
M \rightarrow \mathrm{Hom}(A, M) \rightarrow \mathrm{Hom}(A^{\otimes 2}, M) \rightarrow \mathrm{Hom}(A^{\otimes 3}, M) \rightarrow \cdots .
\end{align*}
The group $G$ acts on the above cochain groups (similar to (\ref{cochain-action})) and making the above complex a $G$-equivariant complex. The cohomology of the following bicomplex (obtained from the induced bicomplex by deleting the first column)
\begin{align*}
\xymatrix{
\vdots & \vdots  &  \\
 \mathrm{Maps} (G^{\times 2}, \mathrm{Hom}( A, M)) \ar[r] \ar[u] & \mathrm{Maps} (G^{\times 2}, \mathrm{Hom}( A^{\otimes 2}, M)) \ar[r]  \ar[u] & ~ \cdots\\
  \mathrm{Maps} (G , \mathrm{Hom}(A, M) ) \ar[u] \ar[r] & \mathrm{Maps} (G, \mathrm{Hom}( A^{\otimes 2}, M)) \ar[u] \ar[r] & ~ \cdots\\
   \mathrm{Hom} ( A, M) \ar[r] \ar[u] &  \mathrm{Hom}(A^{\otimes 2}, M)  \ar[u] \ar[r] & \cdots
}
\end{align*}
is called the cohomology of the oriented associative algebra $A$ with coefficients in $M$, and we denote them by $H_{G, \mathrm{Hoch}}^* (A, M).$

Let $D$ be an oriented dendriform algebra over $(G, \varepsilon)$ and $M$ be an oriented representation of $D$. Consider the corresponding oriented associative algebra structure on $D$ and oriented associative $D$-bimodule structure on $M$. It follows from \cite[Theorem 2.9]{das} that the collection of maps
\begin{align*}
S_n : \mathrm{Hom}(\mathbb{K}[C_n] \otimes D^{\otimes n}, M) ~&\rightarrow ~\mathrm{Hom}( D^{\otimes n}, M) , ~~~~ \text{ for } n \geq 1 \\
f ~&\mapsto ~ f_{[1]} + \cdots + f_{[n]}
\end{align*}
defines a morphism from the ordinary dendriform algebra cochain complex $( \{ C^n_{\mathrm{dend}} (D, M) \}_{n\geq 1}, \delta_{\mathrm{dend}})$ to the truncated Hochschild cochain complex $(\{ C^n_{\mathrm{Hoch}} (D, M)\}_{n\geq 1}, \delta_{\mathrm{Hoch}})$. It is easy to see that this morphism is $G$-equivariant. Hence it induces a map between the corresponding bicomplexes. Hence, we get the following.

\begin{prop}\label{ass-dend-mor}
The collection of maps $\{ S_n \}$ induces a morphism
\begin{align*}
H_{G, \mathrm{dend}}^* (D, M) \rightarrow H_{G, \mathrm{Hoch}}^* (D, M) 
\end{align*}
from the cohomology of the oriented dendriform algebra to the cohomology of the corresponding oriented associative algebra.
\end{prop}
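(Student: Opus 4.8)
The plan is to lift the cochain map $S_n$ to a morphism between the two bicomplexes computing $H^*_{G, \mathrm{dend}}(D,M)$ and $H^*_{G, \mathrm{Hoch}}(D,M)$, and then to invoke the standard fact that a morphism of bicomplexes descends to a morphism on total cohomology. Concretely, for each pair $(i,j)$ with $i \geq 0$ and $j \geq 1$, I would define $\widetilde{S} : \mathrm{Maps}(G^{\times i}, \mathrm{Hom}(\mathbb{K}[C_j] \otimes D^{\otimes j}, M)) \rightarrow \mathrm{Maps}(G^{\times i}, \mathrm{Hom}(D^{\otimes j}, M))$ by applying $S_j$ pointwise in the group variables, that is $(\widetilde{S}\nu)(g_1, \ldots, g_i) = S_j(\nu(g_1, \ldots, g_i))$. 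It then remains to check that $\widetilde{S}$ commutes with both the horizontal and the vertical differentials of the two bicomplexes.

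Horizontal compatibility is immediate. The horizontal differentials are $\delta_{\mathrm{dend}}$ and $\delta_{\mathrm{Hoch}}$ applied in the $j$-direction, and they do not interact with the group variables; hence the relation $\delta_{\mathrm{Hoch}} \circ S_j = S_{j+1} \circ \delta_{\mathrm{dend}}$ already recorded in \cite[Theorem 2.9]{das} gives $\partial_h \circ \widetilde{S} = \widetilde{S} \circ \partial_h$ after applying it pointwise in $(g_1, \ldots, g_i)$.

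The heart of the argument is vertical compatibility. The vertical differentials are the group-cohomology differentials built from the $G$-actions on the respective cochain spaces, so it suffices to show that $S_j$ is $G$-equivariant, i.e. $S_j(gf) = g(S_j f)$ for all $g \in G$, where the source action is (\ref{cochain-action}) and the target action is the analogous Hochschild action. For $\varepsilon(g) = +1$ this is transparent, since $S_j(gf)(a_1, \ldots, a_j) = \sum_{r=1}^j g f([r]; g^{-1}a_1, \ldots, g^{-1}a_j) = g (S_j f)(g^{-1}a_1, \ldots, g^{-1}a_j)$. The delicate case is $\varepsilon(g) = -1$, where $S_j(gf)(a_1, \ldots, a_j) = (-1)^{\frac{(j-1)(j-2)}{2}}\, g \sum_{r=1}^j f([j-r+1]; g^{-1}a_j, \ldots, g^{-1}a_1)$. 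The key observation is that the substitution $r \mapsto j-r+1$ is a bijection of $C_j = \{[1], \ldots, [j]\}$, so the inner sum is precisely $(S_j f)(g^{-1}a_j, \ldots, g^{-1}a_1)$, and the surviving sign $(-1)^{\frac{(j-1)(j-2)}{2}}$ is exactly the sign appearing in the Hochschild $G$-action for $\varepsilon(g) = -1$. Thus both sides agree and $S_j$ is $G$-equivariant.

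Granting these two compatibilities, $\widetilde{S}$ is a morphism of bicomplexes; it therefore commutes with the total differential $\partial = \partial_v + (-1)^i \partial_h$ and descends to the asserted morphism $H^*_{G, \mathrm{dend}}(D,M) \rightarrow H^*_{G, \mathrm{Hoch}}(D,M)$. The main obstacle is the bookkeeping in the orientation-reversing case: one must confirm that summing over all of $C_j$ converts the reversal $[r] \mapsto [j-r+1]$ in the dendriform action into the plain reversal of arguments in the Hochschild action while leaving the sign factor intact. Everything else is formal once $S_j$ is known to be simultaneously a chain map and $G$-equivariant.
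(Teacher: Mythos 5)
Your proposal is correct and follows essentially the same route as the paper: the horizontal compatibility is exactly the chain-map property of $S_n$ from \cite[Theorem 2.9]{das}, and the vertical compatibility reduces to the $G$-equivariance of $S_n$, which the paper asserts as ``easy to see'' and which you verify explicitly (the reindexing $[r] \mapsto [j-r+1]$ over all of $C_j$ in the $\varepsilon(g)=-1$ case is precisely the point). No gaps.
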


\subsection{Extensions of oriented dendriform algebras}

It is known that the second Hochschild cohomology group of an associative algebra with coefficients in a bimodule classify equivalence classes of abelian extensions. This result has been extended to various other algebras, including dendriform algebras \cite{das}. In this subsection, we prove an oriented version of this result for dendriform algebras.

Let $D$ be an oriented dendriform algebra over $(G, \varepsilon)$ and $M$ be a $G$-module. Then $M$ can be considered as an oriented dendriform algebra over $(G, \varepsilon)$ with trivial dendriform structure.

\begin{defn}
An extension of $D$ by $M$ is a $\mathbb{K}$-split sequence of oriented dendriform algebras
\begin{align}\label{exten}
\xymatrix{
0 \ar[r] & M \ar[r]^{i} & B \ar[r]^p & D \ar[r] & 0.
}
\end{align}
\end{defn}

Let the splitting of (\ref{exten}) is given by a section $s : D \rightarrow B$ of the map $p$, i.e. $p \circ s = \mathrm{id}_D$. 
An extension of $D$ by $M$ induces an oriented representation of $D$ on $M$ given by
\begin{align*}
a \prec m = s(a) \prec i(m) \qquad a \succ m = s(a) \succ i(m) \qquad m \prec a = i(m) \prec s(a) \qquad m \succ a =  i(m) \succ s(a).
\end{align*} 
This representation does not depend on the chosen section $s$.
\begin{defn}
Two such extensions $B$ and $B'$ are said to equivalent if there is a morphism $\phi : B \rightarrow B'$ of oriented dendriform algebras making the following diagram commutative
\begin{align*}
\xymatrix{
0 \ar[r] & M \ar@{=}[d] \ar[r]^{i} & B \ar[r] \ar[d]^\phi \ar[r]^p & D \ar@{=}[d] \ar[r] & 0 \\
0 \ar[r] & M \ar[r]_{i} & B' \ar[r]_p & D \ar[r] & 0.
}
\end{align*}
\end{defn}

Next, let us fix an oriented representation $M$ of the oriented dendriform algebra $D$. We denote by $\mathcal{E}xt_G (D, M)$ the equivalence classes of extensions of $D$ by $M$ for which the induced representation on $M$ is the prescribed one. With these notations, we have the following.


%

\begin{thm}\label{ext-thm}
There is a one-to-one correspondence between $\mathcal{E}xt_G (D, M)$ and the second cohomology group ${H}^2_{G, \mathrm{dend}} (D, M).$
\end{thm}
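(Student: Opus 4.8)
The plan is to define explicit maps in both directions between $\mathcal{E}xt_G(D,M)$ and $H^2_{G,\mathrm{dend}}(D,M)$ and show they are mutually inverse. The conceptual backbone is the observation that the total-complex cocycle condition $\partial(\alpha,\beta)=0$ splits, bidegree by bidegree, into exactly the three equations (\ref{coho 1}), (\ref{2-cocycle exp}) and (\ref{2-2co}): the component in the purely dendriform direction is $\delta_{\mathrm{dend}}\beta=0$, the component in the purely group direction is the group $2$-cocycle identity for $\alpha$, and the mixed component is the coupled equation relating $\alpha$ and $\beta$ through the orientation.

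First I would pass from an extension to a cocycle. Starting from (\ref{exten}) with a $\mathbb{K}$-linear section $s:D\to B$, $p\circ s=\mathrm{id}_D$, I measure the failure of $s$ to be a morphism: set $\beta^l(a,b)=s(a)\prec s(b)-s(a\prec b)$ and $\beta^r(a,b)=s(a)\succ s(b)-s(a\succ b)$, which lie in $i(M)\cong M$ since $p$ kills them, assembling to $\beta\in\mathrm{Hom}(\mathbb{K}[C_2]\otimes D^{\otimes 2},M)$; and set $\alpha(g;a)=g\cdot s(g^{-1}a)-s(a)\in M$. Then $\delta_{\mathrm{dend}}\beta=0$ (\ref{2-2co}) is forced by the dendriform identities in $B$, the identity (\ref{coho 1}) is forced by the action axiom $(gh)x=g(hx)$ in $B$, and the mixed equation (\ref{2-cocycle exp}) in both cases $\varepsilon(g)=\pm 1$ is obtained by applying the oriented compatibility $g(x\prec y)=gx\prec gy$ (resp.\ $gy\succ gx$) to $x=s(a),\,y=s(b)$ and projecting to $M$. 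For well-definedness on classes, replacing $s$ by $s'=s+i\circ\gamma$ with $\gamma\in\mathrm{Hom}(D,M)$ changes $(\alpha,\beta)$ exactly by the coboundary recorded in (\ref{when-cob}), and an equivalence $\phi:B\to B'$ transports a section of $B$ to one of $B'$ yielding the same cocycle; hence the assignment descends to a map $\mathcal{E}xt_G(D,M)\to H^2_{G,\mathrm{dend}}(D,M)$.

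For the reverse construction, given $(\alpha,\beta)\in Z^2_{G,\mathrm{dend}}(D,M)$ I set $B=M\oplus D$ as a vector space with dendriform operations twisted by $\beta$,
\[
(m,a)\prec(n,b)=\big(a\prec n+m\prec b+\beta^l(a,b),\ a\prec b\big),
\]
and analogously for $\succ$ using $\beta^r$, together with the $G$-action $g(m,a)=\big(gm+\alpha(g;ga),\ ga\big)$. The three defining axioms of an oriented dendriform algebra on $B$ translate back into (\ref{2-2co}), (\ref{coho 1}) and (\ref{2-cocycle exp}), respectively, so that $0\to M\to B\to D\to 0$ is an extension inducing the prescribed representation on $M$. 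That the two constructions are mutually inverse follows by feeding $B$ back through the first construction with the section $a\mapsto(0,a)$, which returns $(\alpha,\beta)$ on the nose, and conversely by checking that an arbitrary extension is equivalent to the one rebuilt from its cocycle via $(m,a)\mapsto i(m)+s(a)$.

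The main obstacle I anticipate is the reverse direction, specifically verifying that $g(m,a)=(gm+\alpha(g;ga),ga)$ is a genuine action (where (\ref{coho 1}) enters) and is compatible with the twisted operations in the orientation-reversing case $\varepsilon(g)=-1$. There the swap $\prec\leftrightarrow\succ$ together with the sign governing (\ref{2-cocycle exp left}) and (\ref{2-cocycle exp right}) must be matched precisely against the two branches of the mixed cocycle equation; this sign-and-swap bookkeeping is the only genuinely delicate point, while the orientation-preserving case and all the dendriform identities are routine once the correspondence between the algebra axioms and the bidegree components of $\partial$ is in place.
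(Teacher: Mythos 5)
Your proposal is correct and follows essentially the same route as the paper: both directions are realized by the same explicit constructions ($B=M\oplus D$ with $\beta$-twisted operations and $\alpha$-twisted $G$-action in one direction, and measuring the failure of a section $s$ to be a morphism in the other), with well-definedness and mutual inverseness checked exactly as you describe. The only difference is an overall sign in your definitions of $\alpha$ and $\beta$ relative to the paper's equations (\ref{ext 1})--(\ref{ext 3}), which is immaterial since the cocycle and coboundary conditions are linear.
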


\begin{proof}
Let  $(\alpha, \beta) \in Z^2_{G, \mathrm{dend}}(D,M)$ be a $2$-cocycle. Take
$B =M\oplus D.$
We define the $G$-action and multiplications on $B$ by
\begin{align*}
g (m , a) =~& (gm + \alpha(g; ga), ga),\\
(m, a) \prec (n,b) =~& (m \prec b + a \prec n + \beta^l(a, b), a \prec b), \\
(m, a) \succ (n, b) =~& (m \succ b + a \succ n + \beta^r (a, b), a \succ b),
\end{align*}
for $(m,a), (n, b) \in B$. We show that $B$ is an extension of the oriented dendriform algebra $D$ by $M$. Note that $B$ is a dendriform algebra as $\beta$ is a $2$-cocyle in the ordinary dendriform cohomology \cite{das}. Next, we observe that for $\varepsilon (g) = +1$,
\begin{align*}
g( (m,a) \prec (n,b)) & = g(m \prec b + a \prec n +\beta^l(a, b), a \prec b) \\
& = (g m \prec g b + g a \prec gn + g \beta^l (a,b) + \alpha(g; ga\prec gb), ga \prec gb).
\end{align*}
On the other hand, we have
\begin{align*}
g(m,a) \prec g(n,b) & = (gm + \alpha(g; ga), ga) \prec (gn + \alpha(g; gb), gb) \\
                                                &= \big( (gm + \alpha(g; ga)) \prec gb + ga \prec (gn + \alpha(g; gb)) + \beta^l (ga, gb), ga \prec gb  \big).
\end{align*}
Applying Equation (\ref{2-cocycle exp left}) in the above equation, we conclude that 
$$g ( (m,a) \prec (n,b)) = g(m,a) \prec g(n,b).$$
By a similar compuatation for $\varepsilon(g) =-1$, we get
$g ( (m,a) \prec (n,b)) = g(n,b) \succ g(m,a).$
In a similar way,
\begin{align*}
g ( (m,a) \succ (n,b)) = \begin{cases}  g(m,a) \succ g(n,b) & \mathrm{if }~~ \varepsilon(g) = +1\\
g(n,b) \prec g(m,a) & \mathrm{if }~~ \varepsilon(g) = -1. \end{cases}
\end{align*}
This shows that $B$ is an oriented dendriform algebra. Next, we define maps $i : M \to B = M\oplus D$ and $p :B = M\oplus D \to D $ by
\begin{align*}
i(m) = (m, 0) \quad \text{ and } \quad p(m,a) = a.
\end{align*}
It is easy to verify that $0 \rightarrow M \xrightarrow{i} B \xrightarrow{p} D \rightarrow 0$ is an extension of $D$ by $M$. 

Let $(\alpha', \beta') \in Z^2_{G, \mathrm{dend}} (D, M)$ be another $2$-cocycle and the difference $(\alpha, \beta ) - (\alpha', \beta') $ be a $2$-coboundary, i.e. in $B^2_{G, \mathrm{dend}} (D, M)$. Then there exists $\gamma \in \mathrm{Hom}(D, M)$ such that
\begin{align*}
&(\alpha - \alpha' )(g;a) = g ( \gamma (g^{-1}a)) - \gamma (a),\\
& (\beta - \beta')([r];a, b)= \theta_0 ( [r]; a, \gamma(b)) - \gamma ( \pi_D ([r];a, b)) + \theta_2 ([r]; \gamma (a), b), ~ \text{ for } [r] \in C_2.
\end{align*}
Let $0 \rightarrow M \rightarrow B' \rightarrow D \rightarrow 0$ denote the extension corresponding to the $2$-cocycle $(\alpha' , \beta')$. The equivalence between these two extensions are given by $\phi : B \rightarrow B', ~ (m, a) \mapsto (m + \gamma (a), a)$. Hence the map $H^2_{G, \mathrm{dend}}(D, M) \rightarrow \mathcal{E}xt(G, M)$ is well defined.

\medskip

For the converse part, let (\ref{exten}) be an extension of $D$ by $M$. Then $M$ is a submodule of $B$ via the inclusion map $i$. Let $s : D \rightarrow B$ be a section of the map $p$, i.e. $s$ satisfies $p \circ s= \mathrm{id}_{D}$. We define a map $\alpha \in \text{Maps}(G, \text{Hom}(\mathbb{K}[C_1]\otimes D,M))$ and a map $\beta \in \text{Hom}(\mathbb{K}[C_2]\otimes D^{\otimes 2},M)$ using the map $s$ as follows:
\begin{align}
\label{ext 1}  \alpha(g ; a) =~& s(a) - g ( s(g^{-1}a)),\\
\label{ext 2}  \beta([1]; a, b) = ~& \beta^l(a,b) = s(a \prec b) - s(a) \prec s(b),\\
\label{ext 3}  \beta([2]; a, b) =~& \beta^r(a,b) = s(a \succ b) - s(a) \succ s(b) .
\end{align}
Note that $p ( \alpha (g;a)) = p \circ s (a) - g ( p \circ s (g^{-1} a)) = 0$ (as $p \circ s= \mathrm{id}_{D}$). Hence $\alpha (g ; a) \in \mathrm{ker}(p) = \mathrm{ im }(i)$, therefore considered as an element in $M$.
We show that $(\alpha, \beta) \in Z^2_{G, \mathrm{dend}}(D,M)$. For this, we will verify the identities (\ref{coho 1}), (\ref{2-cocycle exp}) and (\ref{2-2co}). First, we have
\begin{align*}
g (\alpha(h ; {{g^{-1}}{a}}) ) + \alpha(g ; a) &= g (s(g^{-1}a))- gh (s (h^{-1}g^{-1}a)) + s(a) - g (s(g^{-1}a)) \\ 
                                               &= s(a)- gh (s (h^{-1}g^{-1}a)) = \alpha(gh; a).                                                               
\end{align*}
Hence the identity (\ref{coho 1}) follows. In the next, for $\varepsilon(g)= +1$, we get from Equation (\ref{ext 1}) that
\begin{align} \label{com 1}
s(a \prec b) &= \alpha (g; a \prec b) + g (s(g^{-1}a \prec g^{-1}b )) \\ \nonumber
&= \alpha (g ; a\prec b) + g ( s(g^{-1}a) \prec s(g^{-1}b) + \beta^l(g^{-1}a, g^{-1}b)) \\  \nonumber
& = \alpha (g; a\prec b) + g(s (g^{-1}a)) \prec g(s(g^{-1}b)) + g (\beta^l(g^{-1}a, g^{-1}b)).
\end{align}
On the other hand, from Equation (\ref{ext 2}), we get
\begin{align} \label{com 2}
s (a\prec b) &= s(a)\prec s(b) + \beta^l(a,b) \\ \nonumber
&= \big(g (s(g^{-1}a)) + i(\alpha(g; a))\big) \prec \big(g (s(g^{-1}b)) + i(\alpha(g; b))\big) + \beta^l(a, b) \\ \nonumber
&= g(s(g^{-1}a)) \prec g(s(g^{-1}b)) + a \prec \alpha(g;b) + \alpha(g;a) \prec b + \beta^l(a, b).
\end{align}
Comparing (\ref{com 1}) and (\ref{com 2}), we get
\begin{align*}
a \prec \alpha(g ; b) - \alpha(g ; a\prec b) + \alpha(g ;a) \prec b = g(\beta^l(g^{-1}a, g^{-1}b)) - \beta^l (a,b).
\end{align*}
Similarly, for $\varepsilon(g) = -1$, we have
\begin{align*}
a \prec \alpha(g; b) - \alpha(g; a\prec b) + \alpha(g;a) \prec b = g (\beta^r(g^{-1}b, g^{-1}a)) - \beta^l (a,b).
\end{align*}
One can repeat the computation as above for $[r] = [2]$. For $\varepsilon (g)= +1$ one have
\begin{align*}
a \succ \alpha(g; b) - \alpha(g; a\succ b) + \alpha(g;a) \succ b =g(\beta^r (g^{-1}a, g^{-1}b)) - \beta^r (a,b).
\end{align*}
and for $\varepsilon(g)= -1$ one have
\begin{align*}
a \succ \alpha(g; b) - \alpha(g ; a\succ b) + \alpha(g;a) \succ b = g(\beta^l(g^{-1}b, g^{-1}a)) - \beta^r (a,b).
\end{align*}
This proves the identity (\ref{2-cocycle exp}). Finally, $\beta$ is a $2$-cocycle in the ordinary dendriform algebra cohomology of $D$ with coefficients in $M$ follows from the standard argument \cite{das}.
Hence $(\alpha, \beta) \in Z^2_{G, \mathrm{dend}}(D,M)$. Like previous part, we can show show that equivalent extensions give rise to cohomologous $2$-cocycles. Hence the map $\mathcal{E}xt(D, M) \rightarrow H^2_{G, \mathrm{dend}}(D, M)$ is also defined. 

Finally, the above two maps are inverses to each other. Hence the proof.
\end{proof}

\section{Deformations of oriented dendriform algebras}\label{section-defor-ori-dend}
Gerstenhaber's deformation theory \cite{gers} for associative algebras has been extended to dendriform algebras in \cite{das}. In this section, we study an oriented version of the deformation theory for dendriform algebras.

Let $(D, \prec, \succ)$ be an oriented dendriform algebra over $(G, \varepsilon)$. Let $\phi : G \times D \rightarrow D$ denotes the action of $G$ on $D$, i.e. $\phi (g; a) = ga$, for $g \in G$ and $a \in D$. 
Consider the space $D[[t]]$ of formal power series in $t$ with coefficients from $D$.
\begin{defn} \label{deform defn}
A one-parameter formal deformation of $D$ consists of formal sums 
\begin{align*}
\prec_t = \sum_{i=0}^\infty t^i \prec_i ~~~ , ~~~ 
\succ_t = \sum_{i=0}^\infty t^i \succ_i  ~\in \mathrm{Hom} (D^{\otimes 2}, D) [[t]] ~~~ \text{ and } ~~~
\phi_t =  \sum_{i=0}^\infty t^i \phi_i  ~ \in \mathrm{Map}(G, \mathrm{Hom}(D,D))[[t]]
\end{align*}
in which $\prec_0 = \prec, ~\succ_0 =  \succ, ~ \phi_0 = \phi$ such that $(D[[t]], \prec_t, \succ_t)$ is a $\mathbb{K}[[t]]$-dendriform algebra and $\phi_t$ defines an orientation on this dendriform algebra over $(G, \varepsilon)$.

In other words, for each $n \geq 0$, $a,b,c \in D$ and $g \in G$, the following identities hold
\begin{itemize}
\item[(i)] $ \sum_{i+j = n} ( a \prec_i b ) \prec_j c  = \sum_{i+j = n} a \prec_i ( b \prec_j c + b \succ_j c),$\\
$ \sum_{i+j = n} ( a \succ_i b ) \prec_j c = \sum_{i+j = n}  a \succ_i ( b  \prec_j c ),$\\
$ \sum_{i+j = n} ( a \prec_i b + a \succ_i b) \succ_j c = \sum_{i+j = n} a \succ_i ( b \succ_j c), $
\item[(ii)] $\phi_n (gh; a) = \sum_{i+j = n} \phi_i ( g; \phi_j (h; a)),$ 
 \item[(iii)] $\sum_{i+j = n} \phi_i ( g ; \prec_j (a, b)) = \begin{cases}  \sum_{i+j+k = n} \prec_i ( \phi_j (g ; a), \phi_k (g ; b))  \quad  \text{ if } ~~ \varepsilon(g) = 1 \\
  \sum_{i+j+k = n} \succ_i ( \phi_j (g;b), \phi_k (g; a)) \quad  \text{ if } ~~ \varepsilon(g) = -1, \end{cases}$ 
  \item[(iv)] $\sum_{i+j = n} \phi_i ( g; \succ_j (a, b)) = \begin{cases}  \sum_{i+j+k = n} \succ_i ( \phi_j (g;a), \phi_k (g; b))  \quad \text{ if } ~~ \varepsilon(g) = 1 \\
  \sum_{i+j+k = n} \prec_i ( \phi_j (g;b), \phi_k (g; a)) \quad   \text{ if } ~~ \varepsilon(g) = -1. \end{cases}$
\end{itemize}
\end{defn}

\begin{defn} \label{equiv deform}
Two deformations $(\prec_t, \succ_t, \phi_t)$ and $(\prec_t', \succ_t', \phi_t')$ of an oriented dendriform algebra $D$ are said to be equivalent if there exists a formal isomorphism $\Psi_t = \sum_{i = 0}^\infty t^i \Psi_i : D[[t]] \rightarrow D[[t]]$ with $\Psi_0 = \mathrm{id}_D$ such that $\Psi_t$ defines an isomorphism between oriented dendriform algebras.
\end{defn}

Thus, the following identities must hold for $n \geq 0$, $a,b \in D$ and $g \in G$,
\begin{itemize}
\item[(i)] $\sum_{i+j = n } \Psi_i  ( a \prec_j' b ) = \sum_{i+j+k = n} \Psi_i (a) \prec_j \Psi_k (b)$,\\
$\sum_{i+j = n } \Psi_i  ( a \succ_j' b ) = \sum_{i+j+k = n} \Psi_i (a) \succ_j \Psi_k (b)$,
\item[(ii)] $\sum_{i+j = n} \Psi_i ( \phi_j' (g;a)) = \sum_{i+j = n} \phi_i (g; \Psi_j (a)). $
\end{itemize}

\begin{thm}\label{formal-def}
Let $(\prec_t, \succ_t, \phi_t)$ be a one-parameter formal deformation of an oriented dendriform algebra $D$. Then $( \xi_1, \pi_1)$ is a $2$-cocycle in the cohomology of the oriented dendriform algebra $D$ with coefficients in itself, where $\xi_1 \in \mathrm{Maps} (G, \mathrm{Hom}(D, D))$ is given by $\xi_1 (g; a) = \phi_1 (g; g^{-1} a)$, and 
$\pi_1 \in \mathrm{Hom} (\mathbb{K}[C_2] \otimes D^{\otimes 2}, D)$ is given by 
$\pi_1 ([1]; a, b) = a \prec_1 b$; $\pi_1 ([2]; a, b) = a \succ_1 b$.

In particular, if $\prec_i ~=~ \succ_i ~=~ \phi_i ~=~ 0$ for $0 < i < n$, then the pair $( \xi_n, \pi_n)$ is a $2$-cocycle, where $\xi_n$ and $\pi_n$ are defined in the similar way.

Moreover, the cohomology class of the corresponding $2$-cocycle depends only on the equivalence class of the deformation.
\end{thm}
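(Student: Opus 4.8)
The plan is to use the explicit description of the space $Z^2_{G,\mathrm{dend}}(D,D)$ of $2$-cocycles recorded in the Remark following the bicomplex: a pair $(\alpha,\beta)$ with $\alpha \in \mathrm{Maps}(G, \mathrm{Hom}(\mathbb{K}[C_1]\otimes D, D))$ and $\beta \in \mathrm{Hom}(\mathbb{K}[C_2]\otimes D^{\otimes 2}, D)$ lies in $Z^2_{G,\mathrm{dend}}(D,D)$ precisely when it satisfies the three identities (\ref{coho 1}), (\ref{2-cocycle exp}) and (\ref{2-2co}). It therefore suffices to verify these three identities for the pair $(\xi_1,\pi_1)$, and each one will be read off from the order-$1$ part of a defining equation of a deformation. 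Throughout I will use the relations $\phi_0(g;x)=gx$ and $\phi_1(g;x)=\xi_1(g;gx)$, together with the abbreviations $\beta^l(a,b)=\pi_1([1];a,b)=a\prec_1 b$ and $\beta^r(a,b)=\pi_1([2];a,b)=a\succ_1 b$.

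First I would extract (\ref{2-2co}), that is $\delta_{\mathrm{dend}}(\pi_1)=0$, from the three associativity-type relations in part (i) of Definition \ref{deform defn} at $n=1$; this is exactly the statement that $\pi_1$ is a $2$-cocycle in the ordinary dendriform cohomology, which is the classical computation of \cite{das}. Next, the cocycle condition (\ref{coho 1}) for $\xi_1$ follows from part (ii) of Definition \ref{deform defn} at $n=1$: writing out $\phi_1(gh;a)=g\,\phi_1(h;a)+\phi_1(g;ha)$, substituting $\phi_1(g;x)=\xi_1(g;gx)$ and setting $b=gha$ yields $\xi_1(gh;b)=g\,\xi_1(h;g^{-1}b)+\xi_1(g;b)$, which is (\ref{coho 1}). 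Finally, the mixed identity (\ref{2-cocycle exp}), equivalently its two components (\ref{2-cocycle exp left}) and (\ref{2-cocycle exp right}), comes from parts (iii) and (iv) of Definition \ref{deform defn} at $n=1$: expanding the order-$1$ term of (iii) for $\varepsilon(g)=+1$, substituting $\phi_1(g;x)=\xi_1(g;gx)$ and renaming $a'=ga$, $b'=gb$ produces exactly the $\varepsilon(g)=+1$ branch of (\ref{2-cocycle exp left}), while the $\varepsilon(g)=-1$ branch of (iii) gives the $\varepsilon(g)=-1$ branch of (\ref{2-cocycle exp left}); part (iv) yields both branches of (\ref{2-cocycle exp right}) in the same manner. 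This establishes $(\xi_1,\pi_1)\in Z^2_{G,\mathrm{dend}}(D,D)$.

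For the first non-vanishing term $(\xi_n,\pi_n)$, the point is that under the hypothesis $\prec_i=\succ_i=\phi_i=0$ for $0<i<n$ every convolution sum $\sum_{i+j=n}$ or $\sum_{i+j+k=n}$ in Definition \ref{deform defn} collapses to the terms in which all but one index is $0$; these are precisely the terms appearing in the $n=1$ computation, so the identical manipulation shows $(\xi_n,\pi_n)\in Z^2_{G,\mathrm{dend}}(D,D)$. For the last assertion, suppose $(\prec_t,\succ_t,\phi_t)$ and $(\prec_t',\succ_t',\phi_t')$ are equivalent through $\Psi_t=\mathrm{id}+t\Psi_1+\cdots$, and set $\gamma:=-\Psi_1\in\mathrm{Hom}(D,D)$. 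Reading the order-$1$ part of the equivalence relations in Definition \ref{equiv deform}: part (ii) gives $\phi_1(g;a)-\phi_1'(g;a)=\Psi_1(ga)-g\Psi_1(a)$, which after $b=ga$ becomes $(\xi_1-\xi_1')(g;b)=g(\gamma(g^{-1}b))-\gamma(b)$; and part (i) gives $(\pi_1-\pi_1')([r];a,b)=\theta_1([r];a,\gamma(b))-\gamma(\pi_D([r];a,b))+\theta_2([r];\gamma(a),b)$ for $[r]\in C_2$. These are exactly the coboundary equations (\ref{when-cob}), so $(\xi_1,\pi_1)$ and $(\xi_1',\pi_1')$ differ by an element of $B^2_{G,\mathrm{dend}}(D,D)$ and define the same cohomology class.

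I expect the main obstacle to be the careful bookkeeping in the branches with $\varepsilon(g)=-1$, where the orientation forces an interchange of $\prec$ and $\succ$ together with a reversal of the order of the arguments; one must track these swaps consistently so that the order-$1$ terms of (iii) and (iv) land on the correct right-hand sides of (\ref{2-cocycle exp left}) and (\ref{2-cocycle exp right}). By contrast, the sign and index conventions in the passage to order $n$ are routine once the $n=1$ case is settled, since the vanishing of the intermediate terms removes every cross term.
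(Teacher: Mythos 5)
Your proposal follows the same route as the paper's proof: extract the three cocycle identities (\ref{coho 1}), (\ref{2-cocycle exp}) and (\ref{2-2co}) from the order-one parts of conditions (i)--(iv) of Definition \ref{deform defn}, observe that the first-nonvanishing-term case collapses to the same computation, and read the coboundary relations (\ref{when-cob}) off the order-one part of Definition \ref{equiv deform} (your $\gamma = -\Psi_1$ versus the paper's $\gamma = \Psi_1$ is an immaterial sign choice, since both exhibit the difference of cocycles as a coboundary). The argument is correct in substance, but one concrete assertion in your bookkeeping is backwards, precisely at the point you flagged as delicate: for $\varepsilon(g) = -1$ the order-one part of condition (iii) does \emph{not} yield the $\varepsilon(g)=-1$ branch of (\ref{2-cocycle exp left}). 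Indeed, $\xi_1(g; a \prec b) = \phi_1(g; g^{-1}(a\prec b))$ and $g^{-1}(a\prec b) = g^{-1}b \succ g^{-1}a$ when $\varepsilon(g)=-1$, so after the substitution $\phi_1(g;x)=\xi_1(g;gx)$ the order-one term of (iii) produces an identity in $\xi_1(g; \,\cdot \succ \cdot\,)$, i.e.\ the $\varepsilon(g)=-1$ branch of (\ref{2-cocycle exp right}); symmetrically, the $\varepsilon(g)=-1$ branch of (iv) supplies the missing branch of (\ref{2-cocycle exp left}). This is exactly why the paper assembles each of its two displayed identities from the first part of one of (\ref{pre compare 1}), (\ref{pre compare 2}) together with the \emph{second} part of the \emph{other}. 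Since all four branches of (iii) and (iv) are used in the end, the full set of cocycle identities is still obtained and the proof goes through once the pairing is corrected.
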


\begin{proof}
We will verify (\ref{coho 1}), (\ref{2-cocycle exp}) and (\ref{2-2co}) to show that $(\xi_1, \pi_1)$ is a $2$-cocycle. It follows from condition (ii) of Definition \ref{deform defn} that
\begin{align}\label{last-proof-1}
\phi_1 ( gh; a) = g ( \phi_1 (h; a)) + \phi_1 (g; ha).
\end{align}
From the definition of $\xi_1$, we get
\begin{align*}
g ( \xi_1 ( h; g^{-1} a)) + \xi_1 (g; a) =~&  g (\phi_1 (h; h^{-1} g^{-1} a)) + \phi_1 ( g ; g^{-1} a) \\
\stackrel{(\ref{last-proof-1})}{=}~&   \phi_1 ( gh ; h^{-1} g^{-1} a) = \xi_1 (gh; a).
\end{align*}
Hence the identity (\ref{coho 1}) follows. For $n=1$, from conditions (iii) and (iv) of Definition \ref{deform defn} we get
\begin{align} \label{pre compare 1}
  g(a \prec_1 b) + \phi_1 ( g; a \prec b) = \begin{cases}   ga \prec \phi_1 (g, b) +  \phi_1 (g;a) \prec gb 
 +   ga \prec_1  g b \quad  \text{ if } ~~ \varepsilon(g) = +1 \\
    gb \succ \phi_1 (g; a) +  \phi_1 (g ; b) \succ g a +  gb \succ_1 g a \quad  \text{ if } ~~ \varepsilon(g) = -1, \end{cases} 
\end{align}
\begin{align} \label{pre compare 2}
       g(a \succ_1 b) + \phi_1 ( g; a \succ b)& = \begin{cases} ga \succ \phi_1 (g; b) +  \phi_1 (g;a) \succ gb 
  +   ga \succ_1  g b \quad  \text{ if } ~~ \varepsilon(g) = +1 \\
  gb \prec \phi_1 (g; a) +  \phi_1 (g ; b) \prec g a +  gb \prec_1 g a \quad  \text{ if } ~~ \varepsilon(g) = -1. \end{cases}
\end{align}
From the first part of (\ref{pre compare 1}) and from the second part of (\ref{pre compare 2}), we obtain
\begin{align*}
 a \prec \xi_1 (g; b) - \xi_1 ( g; a \prec b ) + \xi_1 (g;a) \prec b = \begin{cases} g ( g^{-1} a \prec_1 g^{-1} b ) - a \prec_1 b  ~& \mathrm{ if }~~ \varepsilon (g) = +1 \\
g ( g^{-1} b \succ_1 g^{-1} a) -  a \prec_1 b  ~& \mathrm{ if }~~ \varepsilon (g) = -1. \end{cases}
\end{align*}
Similarly, from the first part of (\ref{pre compare 2}) and from the second part of (\ref{pre compare 1}), we get
\begin{align*}
 a \succ \xi_1 (g; b) - \xi_1 ( g; a \succ b ) + \xi_1 (g;a) \succ b = \begin{cases} g ( g^{-1} a \succ_1 g^{-1} b ) - a \succ_1 b  ~& \mathrm{ if }~~ \varepsilon (g) = +1 \\
 g ( g^{-1} b \prec_1 g^{-1} a) - a \succ_1 b  ~& \mathrm{ if }~~ \varepsilon (g) = -1. \end{cases}
\end{align*}
Hence the identity (\ref{2-cocycle exp}) follows. Finally, from the ordinary formal deformations of dendriform algebras, it follows that $\pi_1 \in \mathrm{Hom}(\mathbb{K}[C_2] \otimes D^{\otimes 2}, D)$ is a $2$-cocycle in the ordinary dendriform algebra cohomology of $D$ with coefficients in itself, i.e. (\ref{2-2co}) holds. Therefore, 
$( \xi_1, \pi_1) \in Z^2_{G, \mathrm{dend}} (D, D)$ defines a $2$-cocycle.  In a similar way, if $\prec_i = \succ_i = \phi_i = 0$, for $0 < i < n$, then one can verify that $( \xi_n, \pi_n)$ is a $2$-cocycle.

Next, suppose $(\prec_t, \succ_t, \phi_t)$ and $(\prec_t', \succ_t', \phi_t')$ are two equaivalent deformations of an oriented dendriform algebra $D$. For $n=1$, from conditions (i) and (ii) of Definition \ref{equiv deform} we have
\begin{align*}
  a \prec_1' b  + \Psi_1  ( a \prec b )=~&  a \prec_1 b + \Psi_1 (a) \prec b + a \prec \Psi_1 (b),\\
 a \succ_1' b  + \Psi_1  ( a \succ b )=~&  a \succ_1  b + \Psi_1 (a) \succ  b + a \succ \Psi_1 (b), \\
  \phi_1' (g;a) + \Psi_1 ( ga) =~&  g \Psi_1 (a) + \phi_1 (g; a). 
\end{align*}
The first two conditions is equivalent to
\begin{align*}
(\pi_1' - \pi_1)([r]; a, b) = \theta_1 ([r]; a, \Psi_1 (b)) - \Psi_1 ( \pi_D ([r]; a, b)) + \theta_2 ([r]; \Psi_1 (a), b), ~ \text{ for } [r] \in C_2
\end{align*}
and the third condition is equivalent to $(\xi_1' - \xi_1) (g;a) = g (\Psi_1 (g^{-1} a)) - \Psi_1 (a).$ Therefore, by (\ref{when-cob}), the difference $(\xi_1', \pi_1') - (\xi_1, \pi_1)$ is a $2$-coboundary. Hence, the corresponding cohomology classes are the same.
\end{proof}

To obtain a one-to-one correspondence between equivalence classes of certain deformations and the cohomology group $H^2_{G, \mathrm{dend}} (D, D)$, we define a truncated version of formal deformations.

\begin{defn}
An infinitesimal deformation of an oriented dendriform algebra $D$ is a deformation of $D$ over the base $\mathbb{K}[[t]]/ (t^2).$
\end{defn}

Thus, an infinitesimal deformation of an oriented dendriform algebra $D$ consists of sums 
$\prec_t = \prec + t \prec_1,$ $\succ_t = \succ + t \succ_1$ and $\phi_t = \phi + t \phi_1$ such that $(D[[t]], \prec_t,\succ_t, \phi_t)$ is an oriented dendriform algebra over the base $\mathbb{K}[[t]]/ (t^2)$.  Then we have the following theorem.

\begin{thm}\label{inf-def}
There is a one-to-one correspondence between the space of equivalence classes of infinitesimal deformations and the second cohomology group $H^2_{G, \mathrm{dend}} (D, D)$.
\end{thm}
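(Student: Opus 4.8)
The plan is to construct two mutually inverse maps between the set of infinitesimal deformations and the group of $2$-cocycles $Z^2_{G,\mathrm{dend}}(D,D)$, and then to check that these maps carry the equivalence relation on one side to the coboundary relation on the other, so that they descend to a bijection on the respective quotients. The key simplification is that an infinitesimal deformation is a deformation over $\mathbb{K}[[t]]/(t^2)$, so the identities (i)--(iv) of Definition \ref{deform defn} need only be imposed at orders $n=0$ and $n=1$; the order $n=0$ identities merely recover the given oriented dendriform structure $(\prec,\succ,\phi)$, and the order $n=1$ identities are the sole genuine constraints.

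For the forward map I would send an infinitesimal deformation $(\prec_t=\prec+t\prec_1,\ \succ_t=\succ+t\succ_1,\ \phi_t=\phi+t\phi_1)$ to the pair $(\xi_1,\pi_1)$ defined by $\xi_1(g;a)=\phi_1(g;g^{-1}a)$ and $\pi_1([1];a,b)=a\prec_1 b$, $\pi_1([2];a,b)=a\succ_1 b$, exactly as in Theorem \ref{formal-def}. Since the argument of that theorem uses only the order $n=1$ deformation identities, it already shows $(\xi_1,\pi_1)\in Z^2_{G,\mathrm{dend}}(D,D)$, and moreover that equivalent deformations (Definition \ref{equiv deform} read modulo $t^2$) yield cocycles differing by the coboundary (\ref{when-cob}). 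Thus the forward map descends to a well-defined map from equivalence classes of infinitesimal deformations to $H^2_{G,\mathrm{dend}}(D,D)$.

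For the inverse I would start from a $2$-cocycle $(\xi_1,\pi_1)$ and read off $a\prec_1 b:=\pi_1([1];a,b)$, $a\succ_1 b:=\pi_1([2];a,b)$, and $\phi_1(g;a):=\xi_1(g;ga)$, inverting the substitution used above. The claim is that $(\prec+t\prec_1,\ \succ+t\succ_1,\ \phi+t\phi_1)$ is an oriented dendriform algebra over $\mathbb{K}[[t]]/(t^2)$, which amounts to reversing the computations of Theorem \ref{formal-def}: condition (\ref{2-2co}), that $\pi_1$ is an ordinary dendriform $2$-cocycle, is equivalent to the order-$1$ part of Definition \ref{deform defn}(i); the group-cohomology condition (\ref{coho 1}) on $\xi_1$ unwinds under $\phi_1(g;a)=\xi_1(g;ga)$ to identity (ii); and the mixed condition (\ref{2-cocycle exp}), in its split form (\ref{2-cocycle exp left})--(\ref{2-cocycle exp right}), translates into identities (iii) and (iv).

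Finally I would note that the substitutions $\pi_1\leftrightarrow(\prec_1,\succ_1)$ and $\xi_1\leftrightarrow\phi_1$ are inverse to one another, so the two constructions are mutually inverse bijections between $Z^2_{G,\mathrm{dend}}(D,D)$ and the set of infinitesimal deformations. To pass to quotients it remains to show the two equivalence relations match: by Theorem \ref{formal-def} equivalent deformations give cohomologous cocycles, while conversely an equivalence $\Psi_t=\mathrm{id}_D+t\Psi_1$ forces, at order $t$, precisely the coboundary relations (\ref{when-cob}) with $\gamma=\Psi_1$. The main obstacle is the careful bookkeeping in translating (\ref{2-cocycle exp}) into (iii) and (iv): one must correctly align the two branches $\varepsilon(g)=\pm1$ and the index reversal $[r]\leftrightarrow[3-r]$ encoded in the $G$-action (\ref{cochain-action}) with the two cases of the oriented-action axioms, the same sign and index juggling that already governs the proof of Theorem \ref{formal-def}.
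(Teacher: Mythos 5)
Your proposal is correct and is essentially the intended argument: the paper in fact states Theorem \ref{inf-def} without proof, leaving it as the order-one truncation of the computations already carried out for Theorem \ref{formal-def}. Your construction of the inverse map (reading off $\prec_1,\succ_1,\phi_1$ from a cocycle $(\xi_1,\pi_1)$ via $\phi_1(g;a)=\xi_1(g;ga)$ and reversing the identities (\ref{coho 1})--(\ref{2-2co}) into Definition \ref{deform defn}(i)--(iv) at order $t$) and your matching of equivalences with coboundaries via $\gamma=\Psi_1$ supply exactly the details the paper omits.
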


\section{Further discussions} 
In this section, we collect some relevant questions that need further study.

(A) In \cite{fard-guo} Ebrahimi-Fard and Guo construct a functor from the category of dendriform algebras to the category of Rota-Baxter algebras. This is adjoint to the functor that gives rise to the dendriform structure from a Rota-Baxter operator. It could be interesting to verify whether these two functors restrict to functors between the category of involutive (resp. oriented) Rota-Baxter algebras and the category of involutive (resp. oriented) dendriform algebras.

(B) In Subsection \ref{subsec-inv-dend-inf}, we have discussed involutive $\mathrm{Dend}_\infty$-algebras that are closely related to the cohomology of involutive dendriform algebras (Remark \ref{remark-inf-cohomo}). It could be nice to find oriented $\mathrm{Dend}_\infty$-algebras (also $A_\infty$-algebras) and their relationship with the cohomology of oriented dendriform algebras as described in Section \ref{section-cohomo-ori-dend}.

(C) The cohomology associated to a Rota-Baxter operator on an associative has been defined in \cite{das-rota}. It has been shown that the relation between Rota-Baxter operators and dendriform algebras passes onto the level of cohomology. In a forthcoming paper, we aim to explore Rota-Baxter operators on involutive associative algebras and their relations with the cohomology of involutive dendriform algebras introduced in the present paper.

(D) Recently, Rota-Baxter family of algebras and dendriform family of algebras are introduced with motivation from the renormalization in the quantum field theory \cite{zhang-gao}. In a subsequent paper, we aim to construct the cohomology theory for a dendriform family of algebras and their deformation theory. It is not hard to define involutions (resp. orientations) in a dendriform family of algebras. We also aim to demonstrate the results of the present paper in the context of an oriented dendriform family of algebras.

\medskip


\noindent {\em Acknowledgements.} The research of A. Das is supported by the fellowship of Indian Institute of Technology (IIT) Kanpur. The author thanks the Institute for financial support.

\end{document}